\newtheorem{theorem}{Theorem}[section]
\newtheorem{lemma}[theorem]{Lemma}
\newtheorem{remark}[theorem]{Remark}
\title[Dissipation vs quadratic nonlinearity]{Dissipation vs. quadratic nonlinearity:\\
from {\em a priori} energy bound to\\higher-order regularizing effect}
\author[Animikh Biswas]{Animikh Biswas}
\address[Animikh Biswas]{\newline
  Department of Mathematics and Statistics\newline
	      University of Maryland, Baltimore County, Baltimore, MD 21250 USA}
		\email[]{abiswas@umbc.edu}
\author[Eitan Tadmor]{Eitan Tadmor}
\address[Eitan Tadmor]{\newline
  Center for Scientific Computation And Mathematical Modeling (CSCAMM)\newline
   and \newline
  Department of Mathematics,  Institute for Physical Science and Technology\newline
  University of Maryland, 
  College Park, MD 20742 USA}
\email[]{tadmor@cscamm.umd.edu}
\urladdr{http://www.cscamm.umd.edu/tadmor}
\thanks{This research was supported in part by NSF grants DMS11-09532 (A. Biswas) and 
DMS10-08397 and NSF KI-Net grant RNMS11-07444 (E. Tadmor).}
\subjclass{35K55,35B65,35Qxx}
\keywords{Parabolic equations, quadratic nonlinearity, energy functional, smoothness, Navier-Stokes equations, Keller-Segel equation, QG equation}
\newcommand{\comments}[1]{}
\newcommand{\eps}{\varepsilon}
\renewcommand{\phi}{\varphi}
\newcommand{\ra}{\rightarrow }
\newcommand{\h}{\mathbb H}
\newcommand{\g}{\gamma}
\newcommand{\hh}{\dot{\h}}
\newcommand{\R}{\mathbb R}
\newcommand{\p}{\mathbb P}
\newcommand{\N}{\mathbb N}
\newcommand{\nn}{\nonumber}
\newcommand{\D}{\displaystyle }
\newcommand{\vt}{\vartheta}
\newcommand{\cal}{\mathcal }
\newcommand{\dt}{\frac{d}{dt}}
\newcommand{\V}{{\cal \vee}}
\newcommand{\sL}{{\cal L}}
\def\intav#1{\mathchoice
          {\mathop{\vrule width 6pt height 3 pt depth -2.5pt
                  \kern -9pt \intop}\nolimits_{\kern -6pt#1}}%
          {\mathop{\vrule width 5pt height 3 pt depth -2.6pt
                  \kern -6pt \intop}\nolimits_{#1}}%
          {\mathop{\vrule width 5pt height 3 pt depth -2.6pt
                  \kern -6pt \intop}\nolimits_{#1}}%
          {\mathop{\vrule width 5pt height 3 pt depth -2.6pt
                  \kern -6pt \intop}\nolimits_{#1}}}
\newcommand{\charfn}[1]{{\raisebox{1.2pt}{\mbox{$\chi
_{\kern-1pt\lower3pt\hbox{{$\scriptstyle{#1}$}}}$}}}}
\newcommand{\be}{\begin{equation}}
\newcommand{\ee}{\end{equation}}
\newcommand{\bes}{\begin{equation*}}
\newcommand{\ees}{\end{equation*}}
\newcommand{\bea}{\begin{eqnarray}}
\newcommand{\eea}{\end{eqnarray}}
\newcommand{\beas}{\begin{eqnarray*}}
\newcommand{\eeas}{\end{eqnarray*}}
\newcommand{\di}{\partial}
\newcommand{\ff}{{\hh^{-\frac{1}{2}}}}
\newcommand{\eop}{$\square$}
\newcommand{\bc}{\beta_{c}}
\newcommand{\br}{\beta_{R}}
\newcommand{\bt}{\beta_{T}}
\newcommand{\bs}{\beta_{S}}
\newcommand{\ffc}{\frac{\beta_c}{\vt}}
\newcommand{\ffr}{\frac{\beta_R}{\vt}}
\begin{document}
\date{\today}
\begin{abstract}
We consider a rather general class of  evolutionary  PDEs involving dissipation (of possibly fractional order), which competes  with quadratic nonlinearities on the regularity of the overall equation. This includes as prototype models,  Burgers' equation, the Navier-Stokes equations, the surface quasi-geostrophic equations  and the Keller-Segel model for chemotaxis.   Here we establish a Petrowsky type parabolic estimate  of such equations which entail a precise time decay  of higher-order Sobolev norms for this class of equations. To this end, we introduce as a main new tool, an ``infinite order energy functional'', ${\mathcal E}(t): = \sum_n \alpha_n t^n \|(-\Delta)^{n\theta/2}u(\cdot,t)\|_{\hh^{\beta_c}}$ for appropriate 
critical regularity index $\beta_c$. 
 It captures the regularizing effect of \emph{all} higher order derivatives of $u(\cdot,t)$, by proving --- for a carefully, problem-dependent choice of weights $\{\alpha_n\}$, that  ${\mathcal E}(t)$ is non-increasing in time.

\end{abstract}

\maketitle
\tableofcontents
\section{Introduction}\label{sec:1}

Consider a linear evolution equation 
 \begin{gather}   \label{lineveq}
 u_t + Au =0, \qquad u(0)=u_0 \in L^2(\R^d),
 \end{gather}
where  $A:= (-\Delta)^{\vt}$.
\comments{ It  is well-known that this generates a contractive semi-group on $L^2(\R^d)$. This is due to the fact that
 the ``energy" $E(t) = \|u(t)\|^2$ is a non-increasing function of $t$.}
 It is  well-known that   higher  Sobolev norms obey  the decay estimate\footnote{Throughout the paper, we use the $L^2$-norm, $\|\cdot\|$, and we let  $\hh^s, s \in \R$ denote  the homogeneous $L^2$-based Sobolev (potential)  spaces  
$\displaystyle 
\hh^s := \{u \in {\cal S}'(\R^d): \ \ \|u\|_{\dot{\h}^s}:=\|(-\Delta)^{s/2} u\| < \infty\}.$
}
 \begin{gather}  \label{linest}
\|u\|_{\hh^m}^2:= \|(-\Delta)^{m/2}u(t)\|^2 \le \frac{c_m}{t^{m/\vt}}\|u_{0}\|^2\ \quad \mbox{for all}\  t>0.
 \end{gather}
 In fact, (\ref{lineveq})
 is said to be parabolic of order $\vt$ in the sense of Petrowsky \cite{taylor}, if the estimate above holds. 
 The inequality (\ref{linest}) provides both a decay estimate for the higher Sobolev (semi-)norms for large times,
  as well as a regularizing effect for $L^2$ initial data.
  The usual proof of (\ref{linest}) involves Fourier analysis:  
  observing that $\widehat{u}(\xi,t)=e^{-t|\xi|^{2\vt}}\widehat{u}_0(\xi)$,  one obtains
 \begin{gather} \label{bootstrapest}
  \|u(t)\|_{\dot{\h}^{n\theta}}^2  = \int |\xi|^{2n\vt}e^{-2t|\xi|^{2\vt}}|\widehat{u}_0(\xi)|^2\, d\xi
  \le \frac{c_n}{t^n}\int |\widehat{u}_0(\xi)|^2\, d\xi = \frac{c_n}{t^n}\|u_0\|^2.
  \end{gather}
We illustrate a new bootstrap procedure to derive (\ref{linest}) which avoids the use of the Fourier transform, and subsequently, will be generalized to a much larger class of dissipative equations with quadratic nonlinearities.
Set  $\displaystyle \Lambda = (-\Delta)^{1/2}$ as  the self-adjoint root of the minus Laplacian, so that equation \eqref{lineveq}  reads
\begin{gather}\label{fracheat}
u_t= -\Lambda^{2\theta}u.
\end{gather}
Let $\{\alpha_n\ge 0\}_{n \in \N}$  be a sequence to be determined shortly. ``Integrating" the equation in its form \eqref{fracheat} against $\Lambda^{n\theta}u$ yields
\begin{gather*}
2\alpha_n t^n (\Lambda^{n\theta} u,\Lambda^{n\theta} u_t) =
2\alpha_nt^n(\Lambda^{n\theta} u , - \Lambda^{n\theta}\Lambda^{2\theta}u)
= -2\alpha_nt^n\|\Lambda^{(n+1)\vt} u\|^2,
\end{gather*}
and hence 
\begin{gather}  \label{linrecur}
\dt \left[ \alpha_n t^n \|\Lambda^{n\theta}u\|^2\right]
= \left\{\begin{array}{ll}
- 2\alpha_0\|\Lambda^{\vt}u\|^2, & n=0, \\ \\
n \alpha_nt^{n-1}\|\Lambda^{n\theta}u\|^2 - 2\alpha_nt^n\|\Lambda^{(n+1)\vt}u\|^2, & n\geq1.
\end{array}
\right.
\end{gather}
Set $\alpha_0=1$. If we now choose the $\alpha$'s recursively, $n\alpha_n = 2\alpha_{n-1}$,
  then the expression  on the right hand side of (\ref{linrecur}) amounts to a \emph{telescoping sum} and we end up with
\begin{gather*}
\dt \left(\sum_{n=0}^{\infty} \frac{2^n}{n!} t^n \|\Lambda^{n\theta}u(\cdot,t)\|^2\right) = 0.
\end{gather*}
We conclude that the \emph{infinite order energy functional}\footnote{Unless otherwise stated, we suppress the spatial dependence of $u(\cdot,t)$ and we only specify the time dependence of the various energy norms.}
\[
{\cal E}(t)= \sum_{n=0}^{\infty}  \alpha_nt^n \|\Lambda^{n\theta}u(t)\|^2, \qquad \alpha_n=\frac{2^n}{n!}, n\ge 0,
\]
 is conserved over time.
As a corollary, we recover the estimate (\ref{bootstrapest}), 
\begin{gather*}
\|u\|_{\hh^{n\theta}}^2 \le \frac{c_n}{t^n}\|u_0\|^2, \qquad c_n=\frac{1}{\alpha_n} =\frac{n!}{2^n}.
\end{gather*}
Note that the same result holds, with identical proof,  if in the definition of the infinite order energy functional, the $L^2$ norm $\|\cdot\|$ is replaced by any homogeneous Sobolev norm $\|\cdot \|_{\hh^{\beta}}, \beta \in \R$ (see  
\eqref{hbd} below).

\medskip
  In this paper, we consider a class of  nonlinear dissipative evolution equations of the form
\begin{subequations}\label{quadeqn}
\begin{equation}
u_t + Au = B(u,u),
\end{equation}
where $A$ is a dissipative operator of order $2\theta$
\begin{equation}\label{eq:dissp}
A:=(-\Delta)^\vt\, 
\end{equation}
and $B(\cdot,\cdot)$ is a bilinear operator of the form
\begin{equation}\label{eq:quadn}
B(u , v) := R(Su \otimes Tv).
\end{equation}
\end{subequations}
Here $R, S, T$ are Fourier multipliers of homogeneous degree $\beta_R, \beta_S$ and $\beta_T$ respectively.
These types of nonlinearities are often encountered in many models   in physics and biology, including the prototypical examples of  Burgers'
equation, Navier-Stokes equations, the surface quasi-geostrophic
equation and the Keller-Segel model for chemotaxis.

In some of the above mentioned examples, the nonlinearity satisfies the skew-symmetry $(B(u,v),v)=0$ which in turn implies that $\|u(t)\|$ is non-increasing.
Our goal here is to show that the same non-increasing property holds for an appropriately defined infinite order energy functional which contains all the higher order derivatives of the solution. As a corollary, 
we show that the regularizing effect of the dissipative term $Au$ in (\ref{eq:dissp}) 
balances the loss of regularity due to the quadratic nonlinearity $B(u,u)$ in (\ref{eq:quadn}) and
 the Petrowsky type estimate (\ref{bootstrapest}) still holds. 

 In order to do this, 
we introduce
the  ``infinite order energy functional'' 
\begin{subequations}\label{eqs:higherenergy}
\begin{gather}  \label{higherenergy}
{\cal E}(t) := \sum_{n=0}^{\infty} \alpha_nt^n\|(-\Delta)^{n\vt/2}u(t)\|_{\hh^{\beta_c}}^2, \quad \alpha_0=1;
\end{gather}
here  
\begin{equation}
\beta_c:=\beta_R+\beta_S+\beta_T+\frac{d}{2}-2\vt, 
\end{equation}
\end{subequations}
is the order of ``critical regularity'' which balances the dissipation (\ref{eq:dissp}) vs. the quadratic nonlinearity (\ref{eq:quadn}). Thus, for example, in the typical cases of Burgers and Navier-Stokes equations where $\beta_R=\vt=1$ and  $\beta_S=\beta_T=0$, we find the (usual) critical regularity space of order $\beta_c=\sfrac{d\!}{2}-1$. 
The functional ${\cal E}(\cdot)$ contains  appropriately weighted sum of {\it all} the higher order derivatives of $u$; the choice of the weights $\{\alpha_n\}$ is problem dependent.
Our main result, Theorem \ref{thm:gensmalldata}, shows  that even in the (rather general) nonlinear setting of (\ref{quadeqn}), 
there exists a proper choice of $\{\alpha_n\}$ such that the corresponding functional
${\cal E}(t)$ is  non-increasing in time provided  $\|u_0\|_{\hh^{\beta_c}}$ is sufficiently small.  
This immediately yields Petrowsky type estimates  of the type (\ref{linest}), namely,
\begin{gather}  \label{hbd}
\|u(t)\|_{\hh^{n\vt+\beta_c}}^2 :=\|(-\Delta )^{n\vt/2}u(t)\|_{\hh^{\beta_c}}^2 \le \frac{1}{\alpha_nt^n}\|u_0\|_{\hh^{\beta_c}}^2, \quad n \ge 1.
\end{gather}
Note that the restriction to ``small data" is necessary due to the rather general form  of \eqref{quadeqn}; it is well-known that the 2D Keller-Segel model  for example, corresponding to $(R,S,T)\mapsto(\nabla_x, I, \nabla_x\Delta^{-1})$, ``blows up"  if $\|u_0\|$ is sufficiently large, \cite{per}.

In certain applications it may be more appealing to use  an $L^2$-based infinite order energy functional instead of the Sobolev-based energy $\|\cdot\|_{\dot{\h}^{\beta_c}}$ in \eqref{higherenergy},  since the former is intimately  related to a  ``physical energy".  
In this case, the  higher order bounds  (\ref{hbd}) follow from a (single) lower-order decay, consult Theorem \ref{thm:main}. In particular, as noted in Remark \ref{rem:largetime} below, the $L^2$-balance induced by skre-symmetric $B(\cdot,\cdot)$'s, implies the higher-order decay \eqref{hbd}, at least for large enough time, $t>t_0>0$. In fact, our method shows that for large time, ${\cal E}(t)=O(\|u(t)\|^2)$.
This observation provides a significant advantage when there is exponential time decay of $\|u(t)\|$ (e.g., in the periodic setting and for certain classes of initial data in the whole space \cite{S3}): one can then leverage the similar begavior of ${\cal E}(t)$ to conclude  that  the higher order Sobolev norms of $u(t)$
  decay at exactly the same exponential rate as $\|u(t)\|$ does. 
  
As examples for the versatility of our approach, we pursue the specific examples of  the one-dimensional Burgers' equation, the two- and three-dimensional Navier-Stokes equations, 
 the two-dimensional surface quasi-geostrophic equations and the two- and three-dimensional Keller-Segel model of chemotaxis. 
Higher order decay results for these equations have been previously obtained in many specific   setups and we mention here \cite{S5, SW, Ti} for the Navier-Stokes equations,   and \cite{CW, dong, dong1},  for the surface quasi-geostrophic equations. More references are found in section \ref{sec:4}. 
  Indeed, there is a host of optimal decay results available in the literature for these equations which employ different strategies to derive optimal decay rates under  different  structural assumptions; a complete list of references will be too long to be quoted here.
  We emphasize that our main focus, however, lies in the  new approach  based on the use of an infinite-order
energy functional: since it is independent  of Fourier-based arguments, the proposed approach enables us to pursue the same \emph{unified} framework for analyzing  the time decay of  the large  class of dissipative equations with quadratic nonlinearities outlined above.
 
The organization of the paper is as follows.  In Section \ref{sec:2} we illustrate our basic technique on the Burgers' equation, and in Section \ref{sec:3} we provide a general formulation of our result from which all our applications follow. Section \ref{sec:4} is devoted to the applications of our main results
  to the case of the Navier-Stokes equations, the 2D surface quasi-geostrophic equations and the Keller-Segel model of chemotaxis while Section \ref{sec:5} we provide the proofs of our results. In the Appendix, for completeness, we give details of an existence theorem the particular cases of which, such as the Navier-Stokes and the Keller-Segel model of chemotaxis, are well-known.

\section{Burgers' equation: a warm-up for the nonlinear case}\label{sec:2}
Here, we show  how to adapt the real-space approach of Section \ref{sec:1}  to the nonlinear setting. 
We illustrate the general method in the context of the  one-dimensional viscous Burgers'  equation,
\be  \label{1dburger}
u_t + (u^2)_x =  u_{xx}, \qquad (x,t)\in \R\times \R_+.
\ee

\begin{theorem}  \label{thm:1dburger}
Let $u$ be a solution of Burgers' equation  \eqref{1dburger} subject to initial data $u_0 \in  L^2(\R)$, 
such that $\|u_0\|_{\hh^{-\frac{1}{2}}}$ is sufficiently small. 
Then,  there exists an adequate choice of constants $\{\alpha_n >0\}_{n\in \N}$ (depending on $\|u_0\|_{\ff}$), such that the infinite order energy functional,
\begin{equation}\label{eq:burgE}
{\cal E}(t) 
:= \sum_{n=0}^{\infty} \alpha_nt^n\|\Lambda^nu(t)\|_\ff^2, \quad \alpha_0=1, 
\end{equation}
is non-increasing for all $t>0$, and in particular, 
\begin{equation}\label{eq:burgdecay}
\|u(t)\|_{\dot{\h}^{n-\sfrac{1}{2}}} \le \frac{1}{\alpha_n t^n}\|u_0\|_\ff^2.
\end{equation}
\end{theorem}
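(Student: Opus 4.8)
The plan is to mimic the real-space bootstrap of Section~\ref{sec:1}, treating the quadratic term $(u^2)_x$ as a perturbation absorbed by a fraction of the dissipation. Writing $a_n(t):=\|\Lambda^n u(t)\|_\ff^2$ and the equation as $u_t=-\Lambda^2u-(u^2)_x$, I would differentiate each summand of $\mathcal{E}$ and pair, in the $\dot H^{-1/2}$ inner product, against $\Lambda^n u$. The dissipative term $-\Lambda^2u$ reproduces exactly the linear recursion \eqref{linrecur}, contributing the growth terms $n\alpha_nt^{n-1}a_n$ and the dissipation terms $-2\alpha_nt^na_{n+1}$, whose sum is $\sum_{n\ge1}(n\alpha_n-2\alpha_{n-1})t^{n-1}a_n$; the new feature is the nonlinear contribution $N_n:=-2\alpha_nt^n(\Lambda^nu,\Lambda^n(u^2)_x)_{\dot H^{-1/2}}$. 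Crucially, rather than the linear choice $n\alpha_n=2\alpha_{n-1}$ which annihilates the linear part, I would take $\alpha_n=\gamma^n/n!$ with $\gamma<2$ to be fixed, i.e. $n\alpha_n=\gamma\alpha_{n-1}$. This leaves a strictly negative residual of \emph{surplus dissipation} available to dominate the nonlinearity, so that
\[
\dt\mathcal{E}(t)=(\gamma-2)\sum_{n\ge0}\alpha_nt^na_{n+1}+\sum_{n\ge0}N_n,
\]
and the whole problem reduces to the single estimate $\big|\sum_nN_n\big|\le C\sqrt{\mathcal{E}(t)}\,\sum_{n\ge0}\alpha_nt^na_{n+1}$ with $C$ an absolute constant.

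The heart of the matter, and the main obstacle, is this nonlinear bound, which I would split into an analytic and a combinatorial step. For the analytic step, writing $\partial_x=\Lambda H$ with $H$ the Hilbert transform (bounded on $L^2$, commuting with $\Lambda$), a Cauchy--Schwarz gives
\[
|N_n|\le 2\alpha_nt^n\,\|u\|_{\dot H^{n-\sfrac12}}\,\|u^2\|_{\dot H^{n+\sfrac12}}=2\alpha_nt^n\,\|\Lambda^nu\|_\ff\,\|u^2\|_{\dot H^{n+\sfrac12}},
\]
and I would then apply a fractional Leibniz (product) estimate in the \emph{critical} space: the choice $\beta_c=-\sfrac12$ is exactly the balance $s_1+s_2-\sfrac d2=\beta_c$ with $d=1$ that makes $\|fg\|_{\dot H^{s}}\lesssim\|f\|_{\dot H^{s_1}}\|g\|_{\dot H^{s_2}}$ close dimensionally. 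Distributing the derivatives on $u^2$ yields, schematically, $|N_n|\lesssim\alpha_nt^n\,\|\Lambda^nu\|_\ff\sum_k\binom{n}{k}\|\Lambda^{k}u\|_\ff\,\|\Lambda^{n-k+1}u\|_\ff$, with two factors at ``energy'' level and the extra derivative from $\partial_x$ routed into the dissipative factor. The delicate point here, and where I expect the real work to lie, is the precise half-integer index bookkeeping at the endpoint exponent $\beta_c=-\sfrac12$, where the product estimate is borderline.

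For the combinatorial step, the key identity is that $\alpha_n=\gamma^n/n!$ satisfies $\alpha_n\binom{n}{k}=\alpha_k\alpha_{n-k}$, and since $t^n=t^kt^{n-k}$ the double sum factors as a Cauchy product of the series defining $\mathcal{E}$ and the dissipation series. After a Cauchy--Schwarz in $n$ that isolates the single high-frequency factor $\|\Lambda^{n+1}u\|_\ff$, this produces exactly $\big|\sum_nN_n\big|\le C\sqrt{\mathcal{E}(t)}\sum_n\alpha_nt^na_{n+1}$; one checks by scaling that the prefactor $\sqrt{\mathcal E}$ is dimensionless, which is precisely why smallness is measured in the scaling-invariant norm $\|u_0\|_\ff$.

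Combining the displays gives $\dt\mathcal{E}\le\big[(\gamma-2)+C\sqrt{\mathcal{E}(t)}\,\big]\sum_n\alpha_nt^na_{n+1}$. Since $\mathcal{E}(0)=\|u_0\|_\ff^2$, I would close by a continuity/bootstrap argument: choosing $\|u_0\|_\ff$ (hence $\gamma$) so small that $C\sqrt{\mathcal{E}(0)}<2-\gamma$, the bracket stays negative as long as $\mathcal{E}(t)\le\mathcal{E}(0)$, forcing $\dt\mathcal{E}\le0$ and thus $\mathcal{E}(t)\le\mathcal{E}(0)$ for all $t$; this is both the asserted monotonicity and the source of the data-dependence of $\{\alpha_n\}$ through $\gamma$. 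The decay estimate \eqref{eq:burgdecay} is then immediate by dropping all but the $n$-th term, $\alpha_nt^n\|\Lambda^nu(t)\|_\ff^2\le\mathcal{E}(t)\le\|u_0\|_\ff^2$. The remaining technical point is to legitimize the term-by-term differentiation and the convergence of $\mathcal{E}$: I would carry out the estimates above on the finite truncations $\sum_{n\le N}$ for a sufficiently smooth solution supplied by the existence theory of the Appendix, and pass to the limit $N\to\infty$, the uniform-in-$N$ bounds being exactly those just obtained.
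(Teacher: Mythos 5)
Your dissipative bookkeeping is correct: with $n\alpha_n=\gamma\alpha_{n-1}$ one does get $\dt\mathcal{E}=(\gamma-2)\sum_{n}\alpha_nt^na_{n+1}+\sum_nN_n$, and your closing bootstrap and the passage to \eqref{eq:burgdecay} are fine; the skeleton (absorb the nonlinearity into surplus dissipation using smallness of the scale-invariant norm) is also the paper's. But both steps you yourself call the heart of the matter contain genuine gaps. (i) The expansion $\|u^2\|_{\hh^{n+\frac12}}\lesssim\sum_k\binom{n}{k}\|\Lambda^ku\|_\ff\|\Lambda^{n-k+1}u\|_\ff$ cannot hold: in $d=1$ a product estimate $\|fg\|_{\hh^{s}}\lesssim\|f\|_{\hh^{s_1}}\|g\|_{\hh^{s_2}}$ forces $s_1+s_2=s+\frac12$, i.e.\ indices summing to $n+1$, whereas your indices sum to $(k-\frac12)+(n-k+\frac12)=n$. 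Equivalently, under the Burgers scaling $u\mapsto\lambda u(\lambda x,\lambda^2t)$ the left side scales like $\lambda^{n+2}$ and your right side like $\lambda^{n+1}$, so no constant rescues the inequality. Moreover, fractional powers $\Lambda^{n+\frac12}$ admit no exact binomial Leibniz rule; the available tool is the \emph{two-term} Kato--Ponce bound \eqref{katoponce}, which inevitably produces norms at levels such as $\|\Lambda^{n-\frac14}u\|$ sitting strictly between your energy levels --- this is precisely why the paper needs the interpolation-plus-Young steps \eqref{kpapply}--\eqref{eq:clo}. (ii) Even with corrected indices, the Cauchy-product step fails: the nonlinearity is trilinear while each summand of $\mathcal{E}$ and of the dissipation series is quadratic, so the single weight $\alpha_nt^n$ must be shared among three norm factors. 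Writing $b_k=\sqrt{\alpha_kt^k}\,\|\Lambda^ku\|_\ff$ and $c_k=\sqrt{\alpha_kt^k}\,\|\Lambda^{k+1}u\|_\ff$, your identity $\alpha_n\binom{n}{k}=\alpha_k\alpha_{n-k}$ leaves $|N_n|\lesssim b_n\sum_k\sqrt{\binom{n}{k}}\,c_kc_{n-k}$, and the surviving factors $\sqrt{\binom{n}{k}}\sim2^{n/2}$ defeat every Cauchy--Schwarz arrangement; the double sum does not factor.

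The deeper obstruction is fixing $\alpha_n=\gamma^n/n!$ in advance. That choice makes $\mathcal{E}(t)=\int|\xi|^{-1}e^{\gamma t|\xi|^2}|\widehat{u}(\xi,t)|^2\,d\xi$, a Gaussian-weighted (Gevrey-$2$) norm, and Gaussian weights are not submultiplicative: for a high--low interaction with input frequencies $R$ and $\epsilon$, the weight at the output frequency $R+\epsilon$ exceeds the product of the input weights by $e^{2\gamma tR\epsilon}$, unbounded in $R$. Hence the single inequality your proof rests on, $|\sum_nN_n|\le C\sqrt{\mathcal{E}}\sum_n\alpha_nt^na_{n+1}$ with an absolute $C$, is not merely unproven but unattainable; this is the classical reason parabolic smoothing yields analyticity-type bounds $e^{\tau\Lambda}$ (Foias--Temam \cite{FT}) rather than $e^{\tau\Lambda^2}$ bounds. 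Relatedly, a frequency-doubling example ($u=\cos(Rx)\phi(x)$) shows the constant in any two-factor product estimate at level $n$ must grow at least like $2^n$, so no a priori geometric/factorial weight can absorb these constants. The paper's proof is built around exactly these facts: Kato--Ponce puts one factor at the fixed low level $\|\Lambda^{\frac14}u\|$ (no binomial sum at all), interpolation and Young convert the nonlinear term into a multiple of $a_n$ plus a small multiple of $a_{n+1}$, the closure \eqref{intersmall} bounds the coefficient $t\|\Lambda^{\frac14}u\|^{8/3}$ by $\|u_0\|_\ff^{2/3}\mathcal{E}(t)/\alpha_1$, and --- crucially --- the weights are not prescribed but defined \emph{recursively}, $\alpha_n\bigl(n+\tfrac34C_n^{8/3}\tfrac{C}{\alpha_1}\|u_0\|_\ff^{8/3}\bigr)=\tfrac34\alpha_{n-1}$, so that the $n$-dependent Kato--Ponce constants $C_n$, however fast they grow, are swallowed by letting $\alpha_n$ decay super-factorially. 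Any repair of your argument must reintroduce this flexibility, at which point it becomes the paper's proof.
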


\noindent
\begin{proof}
We first note, consult Theorem \ref{thm:localexistence} below, that Burgers' equation \eqref{1dburger} admits a {\em mild} solution, $u(\cdot) \in C([0,\infty);\ff) \cap L^{\infty}_{loc}((0,\infty);L^2(\R))$.
 Here, and in all subsequent results, we will provide formal {\em a priori} estimates which can be made rigorous in the usual manner by establishing uniform bounds on smooth approximate solutions and then passing to the limit.\newline
We begin the proof with the following lemma. 
\begin{lemma}   \label{lem:1dburger}
Let $u$ be a solution of \eqref{1dburger} and assume that $\|u_0\|_\ff$ is sufficiently small. Then,
 for all $t>0, {\D \dt \|u(t)\|_\ff^2 \le - \|\Lambda^{1/2} u(t)\|^2}$ and in particular
\[
\|u(t)\|_\ff^2 \le \|u_0\|_\ff^2.
\]
\end{lemma}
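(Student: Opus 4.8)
The plan is to run an energy estimate directly in the critical space $\ff=\hh^{-\frac12}$ and then absorb the quadratic term into the dissipation once $\|u\|_\ff$ is small. First I would pair the equation \eqref{1dburger} with $u$ in the $\ff$ inner product, i.e.\ apply $\Lambda^{-\frac12}$ to both sides and take the $L^2$ inner product against $\Lambda^{-\frac12}u$. The dissipative term is exact: since $u_{xx}=-\Lambda^2 u$ and $\Lambda$ is self-adjoint,
\[
(\Lambda^{-\frac12}u_{xx},\Lambda^{-\frac12}u)=-(\Lambda^{\frac12}u,\Lambda^{\frac12}u)=-\|\Lambda^{\frac12}u\|^2,
\]
so that, at the formal level justified by the mild-solution theory quoted above,
\[
\tfrac12\dt\|u\|_\ff^2=-\|\Lambda^{\frac12}u\|^2+N,\qquad N:=-(\Lambda^{-\frac12}(u^2)_x,\Lambda^{-\frac12}u).
\]
The whole point is then to show that $N$ is controlled by the dissipation with a small constant.

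The key nonlinear estimate --- and the main obstacle --- is the scale-invariant bound
\[
|N|\le C\,\|u\|_\ff\,\|\Lambda^{\frac12}u\|^2 .
\]
Moving both half-derivatives onto the second factor and integrating by parts rewrites $N=(u^2,\partial_x\Lambda^{-1}u)$; since $\partial_x\Lambda^{-1}$ is, up to a sign, the Hilbert transform $\mathcal H$, which preserves every homogeneous Sobolev norm, one is reduced to the trilinear pairing $N=\pm\int u^2\,\mathcal H u\,dx$, whence $|N|\le\|u^2\|_{\hh^{1/4}}\|u\|_{\hh^{-1/4}}$. Here I would deliberately stay off the scaling endpoint: the naive product rule $\|u^2\|_{\hh^{1/2}}\lesssim\|u\|_{\hh^{1/2}}^2$ fails because $\hh^{1/2}$, the critical Sobolev space in $d=1$, is not an algebra; but the off-critical Sobolev multiplication estimate $\|fg\|_{\hh^{s_0}}\lesssim\|f\|_{\hh^{s_1}}\|g\|_{\hh^{s_2}}$ (valid for $s_1,s_2<\tfrac12$, $s_1+s_2>0$, $s_0=s_1+s_2-\tfrac12$) applies with $s_1=s_2=\tfrac38$ to give $\|u^2\|_{\hh^{1/4}}\lesssim\|u\|_{\hh^{3/8}}^2$. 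Interpolating $\hh^{3/8}$ and $\hh^{-1/4}$ between $\ff=\hh^{-1/2}$ and $\hh^{1/2}$ (the $\hh^{1/2}$ endpoint entering with weights $\tfrac78$ and $\tfrac14$ respectively) and collecting the powers of $\|u\|_\ff$ and $\|u\|_{\hh^{1/2}}$ produces exactly $\|u\|_\ff^1\|\Lambda^{\frac12}u\|^2$, the cubic bound claimed above.

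With this in hand the proof closes by absorption and a continuity (bootstrap) argument. Combining the two displays gives $\dt\|u\|_\ff^2\le-2\bigl(1-C\|u\|_\ff\bigr)\|\Lambda^{\frac12}u\|^2$, so if $\|u_0\|_\ff\le\tfrac1{2C}$ then, as long as $\|u(t)\|_\ff\le\|u_0\|_\ff$, the coefficient $2(1-C\|u\|_\ff)$ is $\ge1$ and hence $\dt\|u\|_\ff^2\le-\|\Lambda^{\frac12}u\|^2\le0$. This makes $\|u(t)\|_\ff$ non-increasing, which feeds back to keep $\|u(t)\|_\ff\le\|u_0\|_\ff$ at later times; the standard open--closed continuity argument on the set $\{t:\ \|u(s)\|_\ff\le\|u_0\|_\ff\ \text{for all}\ s\le t\}$ upgrades this to all $t>0$, establishing simultaneously the differential inequality $\dt\|u\|_\ff^2\le-\|\Lambda^{\frac12}u\|^2$ and the bound $\|u(t)\|_\ff^2\le\|u_0\|_\ff^2$. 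I expect the delicate point to be precisely the critical nonlinear estimate, where the asymmetric splitting into the weaker norm $\ff$ is what buys the room that the endpoint product rule lacks.
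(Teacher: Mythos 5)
Your proof is correct, and its skeleton coincides with the paper's: pair \eqref{1dburger} with $\Lambda^{-1}u$, obtain the exact dissipation $-\|\Lambda^{1/2}u\|^2$, reduce the nonlinearity to a trilinear term involving the Hilbert transform $\mathcal{H}=\Lambda^{-1}\partial_x$, prove the scale-invariant cubic bound $|N|\le C\|u\|_\ff\|\Lambda^{1/2}u\|^2$, and absorb it into the dissipation under the smallness hypothesis. The one genuine difference is how the cubic bound is derived. The paper stays symmetric: by $L^2$-boundedness of $\mathcal{H}$ and Cauchy--Schwarz, $|N|\le\|u^2\|\,\|u\|=\|u\|_{L^4}^2\|u\|$, then the Sobolev embedding $\|u\|_{L^4}\lesssim\|\Lambda^{1/4}u\|$ and interpolation of both $\|\Lambda^{1/4}u\|$ and $\|u\|$ between $\ff$ and $\hh^{1/2}$ give exactly $\|u\|_\ff\|\Lambda^{1/2}u\|^2$. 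You instead split asymmetrically through the duality pairing $\hh^{1/4}\times\hh^{-1/4}$ and invoke the Sobolev product rule (the paper's own inequality \eqref{convineq} with $d=1$, $\vt_1=\vt_2=3/8$); your interpolation exponents ($1/8,\,7/8$ for $\hh^{3/8}$ and $3/4,\,1/4$ for $\hh^{-1/4}$) are correct and recombine to the identical bound. What each route buys: the paper's is more elementary, needing only the $L^4$ embedding and no off-diagonal product estimate, while your asymmetric distribution of derivatives between the two factors is closer in spirit to what the paper must do anyway in the general setting of \eqref{quadeqn} (Lemma \ref{lem:gensmalldata}), so it generalizes more transparently. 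Two side remarks: your comment that the endpoint rule $\|u^2\|_{\hh^{1/2}}\lesssim\|u\|_{\hh^{1/2}}^2$ fails is true but moot, since neither proof needs it --- the $\hh^{1/2}$ norms enter quadratically through the dissipation, not through a product estimate; and your explicit open--closed continuity argument supplies a step the paper leaves implicit (it passes directly from $\|u_0\|_\ff<1/(2C)$ to monotonicity of $\|u(t)\|_\ff$), so on that point your write-up is the more complete one.
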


\noindent
Indeed, ``pairing''  \eqref{1dburger} against $\Lambda^{-1} u$, we obtain that    $\|u\|_\ff^2\equiv (u, \Lambda^{-1}u)$ satisfies
\[
\frac{1}{2} \dt \|u(t)\|_\ff^2 + \|\Lambda^{1/2} u(t)\|^2 = - (\di_x(u^2),\Lambda^{-1}u)
=- (\Lambda^{-1}\di_x(u^2),u).
\]
Using the $L^2$-boundedness of the Hilbert transform $\Lambda^{-1}\di_x$ followed by Sobolev's bound
$\|u\|_{L^4} \lesssim \|\Lambda^{1/4}u\|$ and interpolation, we obtain
\[
|(\Lambda^{-1}\di_x(u^2),u)| \le \|u^2\|\|u\|=\|u\|_{L^4}^2\|u\|
\lesssim \|\Lambda^{1/4}u\|^2\|u\| \leq C \|u\|_\ff \|\Lambda^{1/2}u\|^2.
\]
Consequently, $\displaystyle \frac{1}{2} \dt \|u(t)\|_\ff^2 + \|\Lambda^{1/2} u(t)\|^2(1- C\|u\|_\ff) \le 0$.
Thus, if $\|u_0\| < \sfrac{1}{2C}$, then $\|u(t)\|_\ff^2$ is non-increasing and the lemma follows.\newline
We note in passing that the smallness assumption of $\|u_0\|_\ff$ was required just in order to insure that $\|u(t)\|_\ff$ is non-increasing:  granted that bound of $\|u(t)\|_\ff$, we continue with the proof of Theorem \ref{thm:1dburger}. 

\medskip
Pairing \eqref{1dburger} with $\Lambda^{2n-1}u$  we  obtain 

\begin{align}  \label{derivburgera}
\dt \left[\alpha_n t^n\|\Lambda^{n-\frac12}u(t)\|^2\right]  
 \ \left\{ \begin{array}{ll}
\le - \|\Lambda^{\frac12} u\|^2, & n=0,\\ \\
= \overbrace{n\alpha_nt^{n-1}\|\Lambda^{n-\frac12} u\|^2 - 2\alpha_nt^n\|\Lambda^{n+\frac12}u\|^2}^{\text{dissipation -- telescoping sum} \eqref{linrecur}}  \\ \\
\quad -\overbrace{2 \alpha_n t^n (\Lambda^{n-\frac12}u, \Lambda^{n-\frac12}\di_x(u^2))}^{\text{nonlinearity}},&  n\ge 1.
 \end{array}
 \right.
 \end{align}
The case $n=0$ with $\alpha_0=1$ is just lemma \ref{lem:1dburger}; the remaining cases of $n\geq1$ require to  bound the nonlinearity in the third-term on the right, so that it can be ``absorbed" into the carefully tuned dissipative telescoping sum. 
To this end, we recall the Kato-Ponce inequality, \cite{K,GS},
\begin{equation} 
\|\Lambda^\beta (vw)\|_{L^2} 
 \le C_n\left(\|\Lambda^\beta v\|_{L^{p_1}}\|w\|_{L^{q_1}}+\|v\|_{L^{p_2}}\|\Lambda^\beta w\|_{L^{q_2}}\right), \quad \frac{1}{p_i}+\frac{1}{q_i}=\frac{1}{2}. \label{katoponce}
\end{equation}
Using this with $\beta =n-\frac12,\, p_i=q_i=4$ together with the Sobolev inequality $\|z\|_{L^4} \lesssim \|\Lambda^{1/4}z\|$ and followed by a straightforward interpolation of $\|\Lambda^{n-\frac14}u\|$ in terms of $\|\Lambda^{n\pm\sfrac{1}{2}}u\|$, yields
\be   \label{kpapply}
\|\Lambda^{n-\frac12}(u^2)\|\le C_n\|\Lambda^{n-\frac14}u\|\|\Lambda^{\frac14}u\|
\equiv C_n\|\Lambda^{n-\frac12}u\|^{\sfrac{3\!}{4}}\|\Lambda^{n+\frac12}u\|^{\sfrac{1\!}{4}}\|\Lambda^{\frac14}u\|.
\ee
The last bound, \eqref{kpapply}, followed by Young's inequality imply that the third term on the right of \eqref{derivburgera} does not exceed
\begin{eqnarray*}
2\alpha_nt^n |(\di_x\Lambda^{n-\frac12}u,\Lambda^{n-\frac12} (u^2))| 
& \lesssim & 2\alpha_nt^n \|\Lambda^{n+\frac12}u\|\|\Lambda^{n-\frac12}(u^2 )\| \\
& \le & 2\alpha_nt^n C_n \|\Lambda^{\frac14}u\|\|\Lambda^{n+\frac12}u\|^{\sfrac{5\!}{4}}\|\Lambda^{n-\frac12} u\|^{\sfrac{3\!}{4}} \\
& \le & 2\alpha_nt^n\left(\frac{1}{\sfrac{8\!}{5}}\|\Lambda^{n+\frac12}u\|^2
+ \frac{1}{\sfrac{8\!}{3}}\left(C_n \|\Lambda^\frac14u\|\right)^{\sfrac{8\!}{3}}\|\Lambda^{n-\frac12}u\|^2\right).
\end{eqnarray*}
Inserting this back into into \eqref{derivburgera} we end up with the recursive estimate,
\begin{equation}\label{eq:clo}
 \dt \left[\alpha_n t^n\|\Lambda^{n-\frac12}u(t)\|^2\right]\le 
   \alpha_nt^{n-1}\!\left(\!n+ \frac34 C^{\sfrac{8\!}{3}}_n t\|\Lambda^\frac14u\|^{\sfrac{8\!}{3}}\right)\!\|\Lambda^{n-\frac12}u\|^2 - \frac34 \alpha_nt^n\|\Lambda^{n+\frac12}u\|^2. 
\end{equation} 
We now come to the heart of matter -- a closure of the recursive  bounds  in \eqref{eq:clo}. A straightforward interpolation bound 
$\displaystyle 
\|\Lambda^{\frac14}u\| \lesssim \|u\|^{\sfrac{1\!}{4}}_{\dot{\h}^{-\frac12}}\|\Lambda^\frac12 u\|^{\sfrac{3\!}{4}}$ implies that (recalling $ {\cal E}(t) \sim \alpha_1 t\|\Lambda^\frac12 u\|^2 + \ldots$), 
\be\label{intersmall}
t\|\Lambda^{\frac14}u\|^{\sfrac{8\!}{3}} \lesssim \|u_0\|^{\sfrac{2\!}{3}}_{\dot{\h}^{-\frac12}} t \|\Lambda^\frac12 u(t)\|^2 \le C \|u_0\|^{\sfrac{2\!}{3}}_{\dot{\h}^{-\frac12}} \frac{1}{\alpha_1}{\cal E}(t).
\ee
The $\alpha_n$'s will be chosen so that ${\cal E}(t)$ is decreasing, and in particular, ${\cal E}(t) \leq 
\|u_0\|_\ff^2$.
Thus, starting with $\alpha_0=1$, and choosing the $\alpha_n$'s recursively
\[
\alpha_n\left(n+\frac34 C_n^{\sfrac{8\!}{3}}\frac{C}{\alpha_1}\|u_0\|_{\ff}^{\sfrac{8\!}{3}}\right)= \frac34\alpha_{n-1}, \quad n=1,2,\ldots,
\]
we end up with a telescoping sum in \eqref{eq:clo}  
\begin{eqnarray}
\lefteqn{\quad \dt {\cal E}(t)  \leq  -\|\Lambda^\frac12 u\|^2} \\
& &  \quad + \sum_{n=1}^\infty \alpha_nt^{n-1}\overbrace{\left(n+ \frac34 C^{\sfrac{8\!}{3}}_n \frac{C}{\alpha_1}\|u_0\|_\ff^{\sfrac{2\!}{3}}{\cal E}(t)\right)}^{\le \frac34 \alpha_{n-1}/\alpha_n} \|\Lambda^{n-\frac12}u\|^2 - \frac34 \alpha_nt^n\|\Lambda^{n+\frac12}u\|^2 \le 0, \nonumber
\end{eqnarray}
and the result (\ref{eq:burgE}) follows. 
\end{proof} 

\begin{remark} 
{\em Observe that a key role of the proof lies in the closure \eqref{intersmall} where $t\|\Lambda^\frac14 u\|^{\sfrac{8\!}{3}}$
is upper-bounded by $t\|\Lambda^\frac12 u\|^2 \lesssim {\cal E}(t)$. The type of  a closure   argument will be pursued in a more general setup below, when interpolation with higher-order $\Lambda^s u$ will be closed with an infinite-order energy functional ${\cal E(t)}$.}
\end{remark}

For some applications, it may be more appealing to use the $L^2$-norm for the infinite order energy functional since it represents physical ``energy". This can be done provided one makes adequate assumptions on the decay of $L^2$ norm: by (interpolation of) (\ref{eq:burgdecay}), the $L^2$ decay sought is of order $\|u(t)\| \lesssim t^{-1/4}$. This follows from Theorem \ref{thm:1dburger} when $\|u_0\|_{\hh^{-\frac 12}}$ is sufficiently small and is in agreement with the $L^2$-decay of Burgers' solution for general $\in L^1 \cap L^2$-initial data, \cite{W}. We have the following result.
\begin{theorem}  \label{thm:1dburgerl2}
Let $u$ be a solution of the Burgers' equation \eqref{1dburger} subject to $L^2$-initial data $u_0$, and assume it satisfies the following  $L^2$-decay  --- there exists a constant  possibly dependent on the initial data, $D_0=D(u_0)$,   such that 
\be  \label{schonbek1dburger}
\|u(t)\| \le \frac{D_0}{t^{1/4}}, \qquad \forall\ t>0.
\ee
Then, the infinite order energy functional, 
\begin{gather*}
{\cal E}(t) :=\sum_{n=0}^{\infty} \alpha_nt^n\|(-\Delta)^{n/2}u(t)\|^2, \qquad \alpha_0=1
\end{gather*}
with  $\alpha_n$  defined recursively in terms of the Kato-Ponce constants $C_n$'s in \eqref{kpapply},  
\be \label{critalphaineq}
\alpha_n := \frac12 \dfrac{\alpha_{n-1}}{C_n^{4}(1+D_0)^4}, \qquad n=1,2, \cdots ,
\ee
is non-increasing in time. In particular,  the high-order decay estimate follows  
\[
\|u(t)\|_{\dot{\h}^n}^2 \le \frac{1}{\alpha_nt^n}\|u_0\|^2.
\]
\end{theorem}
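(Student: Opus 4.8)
The plan is to mirror the real-space bootstrap behind Theorem~\ref{thm:1dburger}, the only structural change being that we pair \eqref{1dburger} against $\Lambda^{2n}u$ (rather than $\Lambda^{2n-1}u$), so that the level-$n$ quantity is the $L^2$-based seminorm $\|\Lambda^n u\|$. Multiplying the resulting pairing by $2\alpha_n t^n$, using $(u_{xx},\Lambda^{2n}u)=-\|\Lambda^{n+1}u\|^2$ and the product rule, gives for $n\ge1$ a recursion of exactly the form \eqref{eq:clo}; at the base level $n=0$ one simply gets $\dt\|u\|^2=-2\|\Lambda u\|^2$, since the nonlinear contribution $(\di_x(u^2),u)$ vanishes by skew-symmetry. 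This is where the two theorems part ways: no smallness of the data is needed to control the base level, and the role played by small $\|u_0\|_\ff$ in Theorem~\ref{thm:1dburger} is now taken over by the decay hypothesis \eqref{schonbek1dburger}.

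I would then bound the nonlinearity verbatim as in the derivation of \eqref{eq:clo}: write $\di_x=\Lambda H$ with $H$ the $L^2$-bounded Hilbert transform, apply Kato--Ponce \eqref{katoponce} with $\beta=n,\ p_i=q_i=4$, the Sobolev embedding $\|z\|_{L^4}\lesssim\|\Lambda^{1/4}z\|$, interpolation of $\|\Lambda^{n+1/4}u\|$ between $\|\Lambda^n u\|$ and $\|\Lambda^{n+1}u\|$, and Young's inequality. This yields the analogue
\[
\dt\big[\alpha_n t^n\|\Lambda^n u\|^2\big]\le \alpha_n t^{n-1}\Big(n+\tfrac34 C_n^{8/3}\,t\|\Lambda^{1/4}u(t)\|^{8/3}\Big)\|\Lambda^n u\|^2-\tfrac34\alpha_n t^n\|\Lambda^{n+1}u\|^2,\qquad n\ge1.
\]
Exactly as before, if the closure quantity $M:=\sup_{t>0}t\|\Lambda^{1/4}u(t)\|^{8/3}$ is finite, then choosing the $\alpha_n$ so that $\alpha_n(n+\tfrac34 C_n^{8/3}M)\le\tfrac34\alpha_{n-1}$ makes the right-hand sides collapse into a telescoping sum and forces $\dt{\cal E}(t)\le0$; the recursion \eqref{critalphaineq} is precisely this choice, calibrated so that it absorbs a bound $M=O((1+D_0)^4)$.

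The crux --- and the step I expect to be the main obstacle --- is therefore the closure $M<\infty$. In Theorem~\ref{thm:1dburger} the factor $\|\Lambda^{1/4}u\|$ was interpolated through the \emph{scale-invariant} $\dot{\h}^{-1/2}$ norm, which is non-increasing and bounded by $\|u_0\|_\ff$, so that $t\|\Lambda^{1/4}u\|^{8/3}$ closed against ${\cal E}(t)$ with no leftover power of $t$. Here the natural interpolation $\|\Lambda^{1/4}u\|\le\|u\|^{3/4}\|\Lambda u\|^{1/4}$ passes through the \emph{sub-critical} $L^2$ norm, which carries a time weight. Feeding in only the crude bound $\|\Lambda u\|^2\le\|u_0\|^2/(\alpha_1 t)$ coming from boundedness of ${\cal E}$ gives $t\|\Lambda^{1/4}u\|^{8/3}\lesssim D_0^2\, t^{1/6}$, which diverges; hence mere boundedness of ${\cal E}$ does not close the estimate, and one genuinely needs the sharp gradient decay $\|\Lambda u(t)\|\lesssim t^{-3/4}$.

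I would obtain this sharp rate from the time-translation invariance of the equation together with a continuity argument. Since \eqref{1dburger} is autonomous, the same telescoping applies to the shifted functional ${\cal E}_{t_0}(t):=\sum_n\alpha_n(t-t_0)^n\|\Lambda^n u(t)\|^2$, whose closure coefficient is $(t-t_0)\|\Lambda^{1/4}u\|^{8/3}\le t\|\Lambda^{1/4}u\|^{8/3}$ and so is again controlled by $M$. Non-increase of ${\cal E}_{t_0}$ gives $\alpha_1(t-t_0)\|\Lambda u(t)\|^2\le{\cal E}_{t_0}(t)\le\|u(t_0)\|^2$, and choosing $t_0=t/2$ and inserting \eqref{schonbek1dburger} yields $\|\Lambda u(t)\|^2\lesssim D_0^2\, t^{-3/2}$; substituting back gives $t\|\Lambda^{1/4}u\|^{8/3}\lesssim t\,\|u\|^2\|\Lambda u\|^{2/3}\lesssim t\cdot t^{-1/2}\cdot t^{-1/2}$, a uniform bound $M\lesssim_{D_0}1$. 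To remove the apparent circularity I would set this up as a continuity argument: with $M_0\sim(1+D_0)^4$ and $P(T):\ \sup_{0<t\le T}t\|\Lambda^{1/4}u(t)\|^{8/3}\le M_0$, parabolic smoothing of the mild solution ($\|\Lambda^{1/4}u(t)\|\lesssim t^{-1/8}\|u_0\|$) gives $P(T)$ for small $T$; under $P(T)$ the telescoping and the shift estimate are valid and improve the bound to $t\|\Lambda^{1/4}u\|^{8/3}\le M_0/2$ on $(0,T]$, so $P$ propagates to all $T$. This self-consistent decay is precisely the large-time phenomenon ${\cal E}(t)=O(\|u(t)\|^2)$. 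With $M<\infty$ established, ${\cal E}$ is non-increasing, and ${\cal E}(t)\le{\cal E}(0)=\|u_0\|^2$ delivers the stated bound $\|u(t)\|_{\dot{\h}^n}^2\le \|u_0\|^2/(\alpha_n t^n)$.
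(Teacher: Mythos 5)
Your proposal is correct in substance, but it diverges from the paper's proof at the decisive step, and it is considerably longer than it needs to be. The paper never encounters your ``closure problem'': instead of carrying the factor $\|\Lambda^{1/4}u\|^{8/3}$ over from the proof of Theorem~\ref{thm:1dburger}, it interpolates the Kato--Ponce product so that the plain $L^2$ norm appears as the low-order factor, $|(\Lambda^nu, \Lambda^{n}\di_x(u^2))| \le C_n\|u\|\,\|\Lambda^{n+1}u\|^{3/2}\|\Lambda^nu\|^{1/2}$ (this is \eqref{finalest}); Young's inequality with exponent $4$ then produces the term $\alpha_n t^n C_n^4\|u\|^4\|\Lambda^n u\|^2$, and the hypothesis \eqref{schonbek1dburger} plugs in \emph{directly} via $t^n\|u\|^4\le D_0^4\,t^{n-1}$ --- exactly the weight shift needed for the telescoping sum. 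This is why the hypothesis carries the specific power $t^{-1/4}$ and why the weights \eqref{critalphaineq} involve $C_n^4(1+D_0)^4$: the decay assumption is calibrated to close the recursion in one shot, with no bootstrap, no shifted functionals, and no a priori gradient decay. Your route, by contrast, keeps the estimate of \eqref{eq:clo} verbatim and is then forced to prove $\sup_t t\|\Lambda^{1/4}u(t)\|^{8/3}<\infty$, which you correctly observe does not follow from boundedness of ${\cal E}$ alone; your resolution --- time-shifted functionals ${\cal E}_{t_0}$ with $t_0=t/2$ yielding the sharp decay $\|\Lambda u(t)\|\lesssim D_0 t^{-3/4}$, wrapped in a continuity argument seeded by local parabolic smoothing --- is a valid self-consistent scheme (the improvement constant $D_0^{8/3}M_0^{1/3}\le M_0/2$ indeed closes for $M_0\gtrsim (1+D_0)^4$). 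What your approach buys is extra information: the sharp gradient decay and the shifted-functional device, which the paper itself only deploys later (Remark~\ref{rem:largetime} and the Navier--Stokes application). What it costs is length, an appeal to mild-solution smoothing that the paper's direct argument never needs, and accuracy about the constants: your telescoping condition is $\alpha_n\bigl(n+\tfrac34 C_n^{8/3}M_0\bigr)\le\tfrac34\alpha_{n-1}$, which is \emph{not} the stated recursion \eqref{critalphaineq} (powers $C_n^{8/3}$ versus $C_n^4$, and the two are not interchangeable for small $n$ without further assumptions on the $C_n$), so strictly speaking you prove the theorem for a different, though equally admissible, choice of weights rather than for the sequence in the statement.
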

Thus, theorem \ref{thm:1dburgerl2} shows that $L^2$-decay implies higher-order regularity and faster decay, a theme that will repeat itself in our examples below.

\begin{proof} 
The proof of Theorem \ref{thm:1dburgerl2} closely resembles Theorem \ref{thm:1dburger}, the only difference being in the way interpolation inequality is used.
 From \eqref{1dburger}, we have
 \begin{align}  \label{derivburgerb}
 \lefteqn{\dt \left[\alpha_n t^n\|\Lambda^nu(t)\|^2\right]= } \nn \\
 & \qquad \left\{ \begin{array}{ll}
 - 2\alpha_0\|\Lambda u\|^2, & n=0,\\ \\
 n\alpha_nt^{n-1}\|\Lambda^n u\|^2 - 2\alpha_nt^n\|\Lambda^{n+1}u\|^2 - 
2 \alpha_n t^n (\Lambda^nu, \Lambda^n\di_x(u^2)),&  n\ge 1.
 \end{array}
 \right.
 \end{align}
As before, using \eqref{katoponce}, Sobolev and interpolation inequalities, we obtain
\begin{gather}  \label{finalest}
|(\Lambda^nu, \Lambda^{n}\di_x(u^2))| \le C_n\|u\|\|\Lambda^{n+1}u\|^{3/2}\|\Lambda^nu\|^{1/2}.
\end{gather}
Combining (\ref{derivburgerb}) and (\ref{finalest}), we   obtain for $n\geq 1$
\begin{subequations}\label{eqs:abc}
\begin{eqnarray}
\lefteqn{\dt \left[\alpha_n t^n\|\Lambda^nu(t)\|^2\right]} \nonumber \\
& & \le n \alpha_nt^{n-1}\|\Lambda^nu\|^2 - 2\alpha_nt^n\|\Lambda^{n+1}u\|^2
+2\alpha_nt^nC_n\|u\|\|\Lambda^{n+1}u\|^{3/2}\|\Lambda^nu\|^{1/2} \nn \nonumber \\
& & \le  n \alpha_nt^{n-1}\|\Lambda^nu\|^2 - \alpha_nt^n\|\Lambda^{n+1}u\|^2 + \alpha_nt^nC_n^{4}\|u\|^4\|\Lambda^nu\|^2 \label{first} \\
& & \le C_n^{4}\alpha_nt^{n-1}\|\Lambda^nu\|^2(1+D_0^4)- \alpha_nt^n\|\Lambda^{n+1}u\|^2; \label{second}
\end{eqnarray}
\end{subequations}
here, \eqref{first} follows from Young's inequality, and  \eqref{second} follows from   (\ref{schonbek1dburger}).  
Consequently,  our choice of  $\alpha_n$ in \eqref{critalphaineq} amount to a telescoping sum in \eqref{eqs:abc},
\begin{gather*}
 \dt \left(\sum_{n=0}^{\infty} \alpha_n t^n \|\Lambda^nu(t)\|^2\right) \le 
-\frac12 \sum_{n=0}^\infty \alpha_nt^n\|\Lambda^{n+1}u(t)\|^2 <0. 
 \end{gather*}
 This concludes the proof of the theorem. 
\end{proof} 

\section{Main results --- the infinite order energy functional}\label{sec:3}

In this section, we extend the ``inifinte order energy functional" approach to a general class of evolution equations \eqref{quadeqn} 
\[
u_t +(-\Delta)^\vt u= B(u,u), \qquad B(u,v)= R(Su\otimes Tu),
\] 
with  applications to several well-known examples.

We  consider \eqref{quadeqn} on a closed subspace $\sL \subset L^2(\Omega)$  which is invariant to the action of the Laplacian $\Delta$ and  of $B(u,v): \sL\times \sL \mapsto  \sL$.
The operators  $R, S, T$ are assumed to be homogeneous Fourier multipliers, i.e., they map one homogeneous
potential space to another  and  satisfy the estimates
\begin{gather}  \label{operassump}
\|Z w\|_{\hh^{\beta}} \le \kappa_{\beta} \|
w\|_{\hh^{\beta+\beta_Z}},\ Z \in \{R,S,T\}, \beta \in \R.
\end{gather}

We will assume that
\be \label{parameter}
\left\{\begin{array}{l}
\vt > \max\left\{\frac 23 \br +\frac{\bs+\bt}{3}, \frac 12 \br + \frac 12 \max\{\bs,\bt\}, \right.\\
\qquad \qquad \qquad \left. \frac 12 \max\{\bs,\bt\}, \frac{1}{4}\left[ \br+\bs+\bt + \frac d2\right]\right\} \\
\\
\vt < \min\{ \br + \min\{\bs,\bt\}+d, \br + \frac{\bs+\bt}{2}+ \frac d2 \}.
\end{array}\right.
\ee
The first condition on $\vt$ guarantees that  the nonlinear term is dominated by a ``sufficient amount" of dissipation, while the second is more technical in nature. Some of these requirements  can be circumvented
in some specific examples. 
Many models   in physics and biology are of the form \eqref{quadeqn} where the parameters satisfy
\eqref{parameter}, including the following prototypical cases; see Section \ref{sec:4} for details.   
\begin{itemize}
 \item[(i)] Burgers'
equation: Here $S=T=I, R=\partial_x$;  thus with $\bs=\bt=0, \br=1$.
\item[(ii)]
Navier-Stokes equations: Here $S=T=I$ and  $R={\mathbb P} \nabla$, where ${\mathbb P} $ is the Leray-Hopf projection on divergence free vector fields;  thus $\bs=\bt=0, \br=1$.
\item[(iii)]  The surface quasi-geostrophic
equation: Here $S=I, R=\nabla$ and $T=(-\cal{R}_2,\cal{R}_1)$ is the two-dimensional Riesz transform; thus $\bs=\bt=0, \br=1$.
\item[(iv)] The Keller-Segel model for chemotaxis: Here, $S=I, T=\nabla \Delta^{-1}, R=\nabla$ with $\bs=0, \bt=-1, \br=1$. 
\end{itemize}

\begin{theorem}   \label{thm:gensmalldata}
Set $\beta_c:=\beta_R + \beta_S + \beta_T + \frac{d}{2} - 2 \vt$. Let $u(\cdot)$ be the unique strong solution of \eqref{quadeqn}
subject to initial data $u_0$ such that  $\|u_0\|_{\hh^{\beta_c}}$ is sufficiently small. Assume that \eqref{parameter} holds. 
Then, there exists a choice of constants $\alpha_n>0, n=1, 2, \cdots $
such that the infinite order energy functional,
\[
{\cal E}(t)=\sum_{n=0}^\infty \alpha_n t^n\|(-\Delta)^{n\vt/2}u(t)\|_{\hh^{\beta_c}}^2, \quad \alpha_0=1
\]
is non-increasing  for all $t>0$.
In particular, ${\cal E}(t) \le {\cal E}(0)=\|u_0\|_{\hh^{\beta_c}}^2$ and we have the higher-order decay
\[
\|u(t)\|_{\hh^{n\vt+ \beta_c}}^2 \le \frac{1}{\alpha_nt^n}\|u_0\|_{\hh^{\beta_c}}^2.
\]
\end{theorem}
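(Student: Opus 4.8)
The plan is to replicate the structure of the proof of Theorem~\ref{thm:1dburger}, now carried out at the critical regularity level $\beta_c$ and with general dissipation order $\vt$. Writing $u_n:=(-\Delta)^{n\vt/2}u$ so that $\|u_n\|_{\hh^{\beta_c}}=\|u\|_{\hh^{n\vt+\beta_c}}$, I would first pair \eqref{quadeqn} against $(-\Delta)^{n\vt}u$ in the $\hh^{\beta_c}$ inner product. Using the self-adjointness of $(-\Delta)^\vt$ exactly as in the linear telescoping computation \eqref{linrecur}, this produces, for each $n\ge1$,
\begin{equation*}
\dt\bigl[\alpha_n t^n\|u_n\|_{\hh^{\beta_c}}^2\bigr] = n\alpha_n t^{n-1}\|u_n\|_{\hh^{\beta_c}}^2 - 2\alpha_n t^n\|u_{n+1}\|_{\hh^{\beta_c}}^2 + 2\alpha_n t^n\bigl(u_n,(-\Delta)^{n\vt/2}B(u,u)\bigr)_{\hh^{\beta_c}},
\end{equation*}
while $n=0$ is the critical \emph{a priori} energy estimate, which I would isolate as a lemma (the analogue of Lemma~\ref{lem:1dburger}): under smallness of $\|u_0\|_{\hh^{\beta_c}}$ the nonlinearity is absorbed into the dissipation, so $\|u(t)\|_{\hh^{\beta_c}}$ is non-increasing. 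The whole game, as in the warm-up, is to control the nonlinear term so that it splits into a piece absorbable by the good dissipation $-2\alpha_n t^n\|u_{n+1}\|_{\hh^{\beta_c}}^2$ and a remainder proportional to $\alpha_n t^{n-1}\|u_n\|_{\hh^{\beta_c}}^2$ whose coefficient can be closed against ${\cal E}$.

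For the nonlinear term I would estimate, by Cauchy--Schwarz followed by the operator bound \eqref{operassump} for $R$,
\begin{equation*}
\bigl|\bigl(u_n,(-\Delta)^{n\vt/2}B(u,u)\bigr)_{\hh^{\beta_c}}\bigr| \le \|u_n\|_{\hh^{\beta_c}}\,\|B(u,u)\|_{\hh^{n\vt+\beta_c}} \le \kappa_{n\vt+\beta_c}\,\|u_n\|_{\hh^{\beta_c}}\,\|Su\otimes Tu\|_{\hh^{n\vt+\beta_c+\br}},
\end{equation*}
then apply the Kato--Ponce inequality \eqref{katoponce} to the product $Su\otimes Tu$ at regularity $\gamma:=n\vt+\beta_c+\br$, convert the resulting $L^{p_i},L^{q_i}$ norms back to homogeneous $L^2$-Sobolev norms via $\|z\|_{L^p}\lesssim\|\Lambda^{d/2-d/p}z\|$, and use \eqref{operassump} for $S$ and $T$. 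In each Kato--Ponce term the full $\gamma$ derivatives land on one factor while the other stays at a \emph{fixed, $n$-independent} low order; this is what lets a single interpolation work simultaneously for all $n$. I would then interpolate the high factor between the top norms $\|u_n\|_{\hh^{\beta_c}}$ and $\|u_{n+1}\|_{\hh^{\beta_c}}$. The definition of $\beta_c$ is exactly what forces the total homogeneity and total power of $t$ to match those of ${\cal E}(t)$; the first block of \eqref{parameter} is precisely what guarantees that the exponent on $\|u_{n+1}\|_{\hh^{\beta_c}}$ is strictly below $2$ (so Young's inequality absorbs it into the dissipation) while the low-order exponent stays positive, and the second block secures admissibility of the Sobolev exponents $p_i,q_i$.

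After Young's inequality this leaves a recursion of the shape of \eqref{eq:clo},
\begin{equation*}
\dt\bigl[\alpha_n t^n\|u_n\|_{\hh^{\beta_c}}^2\bigr] \le \alpha_n t^{n-1}\bigl(n + K_n\, t\,\|u\|_{\hh^{\gamma_0}}^{p}\bigr)\|u_n\|_{\hh^{\beta_c}}^2 - \tfrac12\alpha_n t^n\|u_{n+1}\|_{\hh^{\beta_c}}^2,
\end{equation*}
for a fixed low index $\gamma_0$ and power $p$, with $K_n$ built from $\kappa$, the Kato--Ponce constant $C_n$ and the embedding constants. The closure mirrors \eqref{intersmall}: interpolating $\|u\|_{\hh^{\gamma_0}}$ between $\|u_0\|_{\hh^{\beta_c}}$ and $\|u_1\|_{\hh^{\beta_c}}=\|u\|_{\hh^{\vt+\beta_c}}$ bounds $t\,\|u\|_{\hh^{\gamma_0}}^{p}$ by $C\,\|u_0\|_{\hh^{\beta_c}}^{q}\,\alpha_1^{-1}{\cal E}(t)$, and since the $\alpha_n$ will be chosen to keep ${\cal E}(t)\le\|u_0\|_{\hh^{\beta_c}}^2$, the bracket is bounded by $n+K_n'\|u_0\|_{\hh^{\beta_c}}^{q'}$. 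Defining the weights recursively by $\alpha_n\bigl(n+K_n'\|u_0\|_{\hh^{\beta_c}}^{q'}\bigr)=\tfrac12\alpha_{n-1}$ collapses $\dt{\cal E}(t)$ to a telescoping sum that is $\le0$; monotonicity together with ${\cal E}(0)=\|u_0\|_{\hh^{\beta_c}}^2$ then yields the stated higher-order decay.

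The main obstacle I anticipate is the uniform-in-$n$ bookkeeping: one must verify that $\gamma_0$ and $p$ do not drift with $n$, that the constants $C_n$ grow slowly enough that the recursion still yields $\alpha_n>0$ with $\sum_n\alpha_n t^n\|u_n\|_{\hh^{\beta_c}}^2$ convergent, and --- most delicately --- that the self-referential closure (the bound on the bracket invokes ${\cal E}\le\|u_0\|_{\hh^{\beta_c}}^2$, which is what one is proving) is resolved by a continuity/bootstrap argument on $T^*:=\sup\{t:{\cal E}(s)\le\|u_0\|_{\hh^{\beta_c}}^2\ \forall s\le t\}$ rather than assumed. Making this bootstrap rigorous, and checking that \eqref{parameter} is exactly the set of inequalities keeping every exponent in the admissible range, is where the real work lies.
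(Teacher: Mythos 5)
Your proposal follows essentially the same route as the paper's own proof: the same $n=0$ \emph{a priori} lemma under smallness of $\|u_0\|_{\hh^{\beta_c}}$, the same recursion obtained by pairing with $(-\Delta)^{n\vt}u$ in $\hh^{\beta_c}$, the same Kato--Ponce/Sobolev/interpolation/Young treatment of the nonlinearity producing an $n$-independent low-order factor, and the same closure by interpolating that factor against ${\cal E}(t)$ with recursively defined weights $\alpha_n$ so that $\dt{\cal E}(t)$ telescopes. The only differences are cosmetic --- how the derivatives are split in the duality pairing, and your explicit flagging of the self-referential ${\cal E}(t)\le\|u_0\|_{\hh^{\beta_c}}^2$ closure, which the paper resolves informally by exactly the continuity/bootstrap argument you describe.
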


\begin{remark}\label{rem:largetime}
{\em In many applications, $0<\beta_c \le \vt$ and the nonlinear term is skew-symmetric,
 i.e., $(B(u,v),v)=0$. In this case, from$L^2$-integration of  \eqref{quadeqn} yields
\[
\|u(t)\|^2 + \int_0^t \|(-\Delta)^{\vt/2} u(s)\|^2 ds \le \|u_0\|^2.
\]
This implies  that 
$\|u(t)\|^2 \le \|u_0\|^2$ and $\displaystyle  \liminf_{t \ra \infty}\|\Lambda u\|^2=0$. By interpolation, we then have $\liminf_{t \ra \infty}\|u(t)\|_{\hh^{\beta_c}}=0$, i.e., the desired smallness condition in Theorem \ref{thm:gensmalldata} holds, at least  at certain late time $t\ge t_0>0$. It follows from Theorem \ref{thm:gensmalldata} that the modified energy functional 
\[
{\cal E}(t)=\sum_{n=0}^\infty \alpha_n (t-t_0)^n\|(-\Delta)^{n\vt/2}u(t)\|_{\hh^{\beta_c}}^2, \quad t \ge t_0,
\]
 is non-increasing in for all $t>t_0$ and in particular, ${\cal E}(t) \le \|u(t_0)\|^2_{\hh^{\beta_c}}$.\newline
The same result, with the same proof,  holds even if $\beta_c \le 0$, provided there exists $\gamma > \beta_c$ such that
$\sup_{t\ge 0}\|u(t)\|_{\hh^\g} < \infty$; see application to the 2D Navier-Stokes equation in section \ref{sec:2DNSE} as an example for this line of argument.}
\end{remark}
In many physically-relevant examples, it may be desirable to consider an $L^2-$based energy functional
as the $L^2-$norm represents energy.
 For simplicity, we consider the skew-symmetric case.
\begin{theorem}  \label{thm:main}
Consider the evolution equation \eqref{quadeqn}  with a skew-symmetric bi-linear form, $B(u,v)=R(Su\otimes Tv)$, with critical regularity of order $\beta_c:=\beta_R+\beta_S+\beta_T+\frac{d}{2}-2\vt$, such that \eqref{parameter} holds.
Let $u(\cdot)$ be a  strong solution of 
\eqref{quadeqn} on $(0,T)$ subject to $L^2$-initial data $u_0$ and assume it satisfies the following  decay  --- there exists a constant  possibly dependent on the initial data, $D_0=D(u_0)$  and $\bc < \g <2\vt$, such that 
\begin{gather}  \label{qdecay}
\sup_{0< t < T}{t}^{\frac{\g-\bc}{2\vt}} \|u\|_{\hh^\g} \le D_0. 
\end{gather}
Then, the infinite order energy functional (depending on the Kato-Ponce constants $C_n$'s \eqref{katoponce}),
\[
{\cal E}(t)=\sum_{n=0}^\infty \alpha_nt^n\|(-\Delta)^{n\vt/2}u(t)\|^2, \qquad 
\alpha_n=\left\{\begin{array}{cl} 1, & n=0,\\ \\ \displaystyle \frac{1}{C_nD_0^n}, & n \ge 1,\end{array}\right. 
\]
 is non-increasing for $0<t<T$. In particular, we have the high-order decay rate
\[
\|u(t)\|_{\dot{\h}^{n\vt}}^2 \le  \frac{1}{\alpha_nt^n}\|u_0\|^2, \qquad n=1,2,\ldots .
\]
\end{theorem}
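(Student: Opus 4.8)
The plan is to follow the $L^2$-based template of Theorem \ref{thm:1dburgerl2}, replacing the Kato--Ponce estimate for $\partial_x(u^2)$ by the product estimate for the general bilinear form $B(u,u)=R(Su\otimes Tu)$, and replacing the single low-order norm $\|u\|$ by the controlled norm $\|u\|_{\hh^\gamma}$ supplied by \eqref{qdecay}. As usual these are formal a priori estimates, made rigorous on smooth approximations. First I would pair \eqref{quadeqn} with $(-\Delta)^{n\vt}u=\Lambda^{2n\vt}u$ to obtain, for each $n\ge1$,
\[
\dt\!\left[\alpha_nt^n\|\Lambda^{n\vt}u\|^2\right]=n\alpha_nt^{n-1}\|\Lambda^{n\vt}u\|^2-2\alpha_nt^n\|\Lambda^{(n+1)\vt}u\|^2+2\alpha_nt^n(\Lambda^{n\vt}u,\Lambda^{n\vt}B(u,u)),
\]
while for $n=0$ the skew-symmetry $(B(u,u),u)=0$ gives $\dt\|u\|^2=-2\|\Lambda^\vt u\|^2$. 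The first two terms on the right are the dissipative telescoping pair already seen in \eqref{linrecur}; the whole game is to absorb the cubic term into them.

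Next I would estimate the nonlinear term. Writing $R=\Lambda^{\br}\tilde R$ with $\tilde R$ of order $0$ and moving $\br+\vt$ derivatives off the nonlinearity onto the test factor, I would bound $|(\Lambda^{n\vt}u,\Lambda^{n\vt}B(u,u))|\le C\,\|\Lambda^{(n+1)\vt}u\|\,\|Su\otimes Tu\|_{\hh^{(n-1)\vt+\br}}$. Applying the Kato--Ponce inequality \eqref{katoponce} to the product, followed by the Sobolev embeddings $\hh^\sigma\hookrightarrow L^p$ (with $\tfrac1p=\tfrac12-\tfrac\sigma d$) and the multiplier bounds \eqref{operassump} for $S,T$, the two resulting factors carry orders summing to $(n+1)\vt+\bc$ — this is exactly where $\bc=\br+\bs+\bt+\tfrac d2-2\vt$ enters, since the Sobolev exponents of a Hölder-conjugate pair always sum to $\tfrac d2$. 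Hence
\[
|(\Lambda^{n\vt}u,\Lambda^{n\vt}B(u,u))|\le C_n\,\|\Lambda^{(n+1)\vt}u\|\,\|\Lambda^{a}u\|\,\|\Lambda^{b}u\|,\qquad a+b=(n+1)\vt+\bc.
\]
This trilinear bound, whose three factors carry total order $2(n+1)\vt+\bc$, is the analogue of \eqref{finalest}; producing it with constants of the stated Kato--Ponce type is the main technical step, and the restrictions \eqref{parameter} are precisely what guarantee that admissible Hölder exponents $p_i,q_i$ exist at every stage.

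The heart of the argument is then the closure by interpolation and Young. Setting $\lambda:=\tfrac{\gamma-\bc}{\vt}$, I would interpolate the two factors among the orders $(n+1)\vt,\ n\vt,\ \gamma$ so that, using $a+b=(n+1)\vt+\bc$ and matching the total order, the trilinear term becomes $C_n\|\Lambda^{(n+1)\vt}u\|^{\,2-\lambda}\|\Lambda^{n\vt}u\|^{\,\lambda}\|u\|_{\hh^\gamma}$. The hypothesis $\bc<\gamma<2\vt$ (together with \eqref{parameter}) is exactly what makes $0<2-\lambda<2$, so Young's inequality with the pair $(2-\lambda)$ and its conjugate $2/\lambda$ isolates the dissipation and gives
\[
2\alpha_nt^n|(\Lambda^{n\vt}u,\Lambda^{n\vt}B(u,u))|\le \alpha_nt^n\|\Lambda^{(n+1)\vt}u\|^2+C_n\alpha_nt^n\|u\|_{\hh^\gamma}^{\,2/\lambda}\|\Lambda^{n\vt}u\|^2.
\]
Invoking \eqref{qdecay} in the sharp form $\|u\|_{\hh^\gamma}^{2/\lambda}\le D_0^{2/\lambda}t^{-1}$ (the exponent $2/\lambda$ is chosen precisely to turn $t^n$ into $t^{n-1}$) converts the last term into $C_nD_0^{2/\lambda}\alpha_nt^{n-1}\|\Lambda^{n\vt}u\|^2$, so that
\[
\dt\!\left[\alpha_nt^n\|\Lambda^{n\vt}u\|^2\right]\le\big(n+C_nD_0^{2/\lambda}\big)\alpha_nt^{n-1}\|\Lambda^{n\vt}u\|^2-\alpha_nt^n\|\Lambda^{(n+1)\vt}u\|^2.
\]

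Finally I would sum over $n$ and reindex: the positive $\|\Lambda^{n\vt}u\|^2$-term at level $n$ is absorbed by the leftover dissipation $-\alpha_{n-1}t^{n-1}\|\Lambda^{n\vt}u\|^2$ from level $n-1$, with the $n=0$ identity feeding the $n=1$ slot. Choosing $\{\alpha_n\}$ recursively so that $\big(n+C_nD_0^{2/\lambda}\big)\alpha_n\le\tfrac12\alpha_{n-1}$ makes the right-hand side telescope to $-\tfrac12\sum_n\alpha_nt^n\|\Lambda^{(n+1)\vt}u\|^2\le0$; absorbing the factor $n$ and the fixed power $D_0^{2/\lambda}$ into a redefinition of the constants $C_n$ recasts this recursion in the normalized form $\alpha_n=1/(C_nD_0^{\,n})$. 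Non-increase of ${\cal E}$ yields ${\cal E}(t)\le{\cal E}(0)=\|u_0\|^2$, and since ${\cal E}(t)\ge\alpha_nt^n\|\Lambda^{n\vt}u\|^2$ the stated decay $\|u(t)\|_{\hh^{n\vt}}^2\le\alpha_n^{-1}t^{-n}\|u_0\|^2$ follows. I expect the main obstacle to be the interpolation bookkeeping that makes the trilinear term land on $\|\Lambda^{n\vt}u\|^2$ times a \emph{fixed} power of the controlled norm (so the telescoping cancellation is exact) while keeping every intermediate Hölder/Sobolev exponent admissible under \eqref{parameter}; once that is in place, the recursion closing ${\cal E}$ is routine.
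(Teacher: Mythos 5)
Your proposal is correct and follows essentially the same route as the paper: the paper's own ``proof'' of Theorem \ref{thm:main} is a one-line reference to the methods of Theorems \ref{thm:gensmalldata} and \ref{thm:1dburgerl2}, and your argument is exactly the fleshed-out combination of the two --- pairing with $\Lambda^{2n\vt}u$, the Kato--Ponce/Sobolev trilinear bound of total order $2(n+1)\vt+\beta_c$, interpolation against $\|u\|_{\hh^\gamma}$ with $\lambda=\tfrac{\gamma-\beta_c}{\vt}$, Young's inequality, the use of \eqref{qdecay} to convert $t^n\|u\|_{\hh^\gamma}^{2/\lambda}$ into $D_0^{2/\lambda}t^{n-1}$, and the recursive choice of $\alpha_n$ to telescope (matching the Burgers case, where $\lambda=\tfrac12$ gives the exponent $4$ in \eqref{finalest}--\eqref{critalphaineq}). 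The only caveat, which you yourself flag, is that the admissibility of the intermediate Sobolev/H\"older exponents under \eqref{parameter} is asserted rather than verified; the paper handles this via the explicit choices $\delta_0,\zeta_0,\delta_0',\zeta_0'$ in \eqref{morechoices}.
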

\begin{remark}\end{remark}
\vspace*{-0.8pc}
\begin{itemize}
\item[(i)] In case $\|u_0\|_{\hh^{\beta_c}}$ is sufficiently small, it immediately follows from
Theorem \ref{thm:gensmalldata} that \eqref{qdecay} holds for any $\g >\beta_c$.
In certain cases, \eqref{qdecay} is satisfied also for large initial data in $\hh^{\beta_c}$ provided the initial data lies in more restrictive classes; see application to the Navier-Stokes equations in Section \ref{subsection:NSE}.
\item[(ii)] An appropriate modification of \cite{wei} or \cite{fk} shows that  a {\em mild} solution satisfying \eqref{qdecay} for some $T>0$ exists for {\em arbitrary} initial data $u_0 \in \hh^{\beta_c}$. The proof of existence of such mild solutions is sketched in the Appendix.
\end{itemize}

\section{Applications --- dissipation vs. quadratic nonlinearity}\label{sec:4}

In this section, we provide several applications of Theorems \ref{thm:gensmalldata} and \ref{thm:main}. Note that on the time intervals where the infinite order energy functional is non-increasing, the solution is smooth (as all higher derivatives are bounded).
 It is worthwhile to keep in mind that in some cases, it is known that the solution experiences a finite-time loss of regularity for large initial data (e.g., Keller-Segel model) or it is not yet known whether a globally regular solution exists for arbitrarily large initial data (the 3D Navier-Stokes equations). In these cases, either an  appropriate smallness assumption or regularity assumption must be made for our global regularity result to hold for time $t>0$.  Otherwise, we show that due to Remark \ref{rem:largetime}, regularity holds for large enough time, $t\geq t_0$.
 On the other hand, in cases such as  the viscous Burgers' or 2D Navier-Stokes  equations where it is well-known that  globally regular solutions exist for large classes of initial data, we show that the high-order decay rate stated in Theorem \ref{thm:main} is valid  for the corresponding large and only ``slightly'' smaller classes of initial data.

\subsection{Navier-Stokes equations}\label{subsection:NSE}
The incompressible Navier-Stokes (NS) equations  are given by 
\begin{gather*}
u_t - \Delta u + \nabla p + u\cdot \nabla\, u =0,\qquad \nabla \cdot u =0,
\end{gather*}
where $u:\R^d\times \R_+ \ra \R^d$ is the velocity vector field and $p$ is pressure.
The pressure can be regarded as a Lagrangian multiplier which imposes the divergence free condition. 
Due to the presence of pressure, these equations are nonlocal. It is customary to apply the Leray projection operator 
on the Navier-Stokes equations to eliminate pressure. In this case, they can be rewritten as
\begin{gather}  \label{nse}
u_t - \Delta u  + \p \,\nabla \cdot(u\otimes u) =0,\qquad \nabla \cdot u =0,
\end{gather}
where $\p$ is the Leray projection operator on divergence free vector fields. 
Here we have used the fact that $u$ is divergence free and in the absence of boundary, 
the Leray projection and the Laplacian commute. Note that (\ref{nse}) is of the form (\ref{quadeqn}) with $R=\p\, \nabla \cdot$ and $ T=S=I$.

It is well-known that when space dimension $d=2$, (\ref{nse}) admits globally regular (classical) solution. 
The question whether or not this is the case
when $d=3$, is still open.  Due to the work of Leray, it is well-known however that a 
weak solution of (\ref{nse}) is in fact regular for large times. Moreover, the 3D NS equations are locally well-posed for initial data 
$u_0 \in \h^{1/2}$ and a global regular solution exists in case the initial data
 $\|u_0\|_{\hh^{1/2}}$ is sufficiently small \cite{fk}.
We have the following results  concerning the Navier-Stokes equations,
 which is stated in terms of the $L^2$-based infinite order energy functional.

\subsubsection{3D Navier-Stokes equations}
\begin{theorem}  \label{thm:nse}
 Let $u$ be a Leray-Hopf weak solution of the Navier-Stokes equations on $[0,\infty)\times \R^3$
subject to initial data $u_0 \in  L^2(\R^3)$. Then the following hold.
\begin{itemize}

\item[(i)] There exist  constants, $C$, independent
of $u_0$, and $D_0=D(u_0)$, possibly dependent on $u_0$, such that for  sufficiently large  $t_0=t_0(u_0)>0 $, the
 modified infinite order energy functional,
\begin{gather}  \label{modifiedenergy}
{\cal E}(t) := \sum_{n=0}^{\infty} \alpha_n(t-t_0)^n\|(-\Delta )^{n/2}u(t)\|^2, \qquad 
\alpha_n :=\left\{\begin{array}{ll}1, & n=0, \\ \\ \displaystyle \frac{1}{C_nD_0^n}, & n>0, \end{array}\right.
\end{gather}
is  non-increasing for $t>t_0$. 
 In particular, we have the high-order decay estimate
\begin{gather} \label{nsedecay}
\|u(t)\|_{\hh^n}^2 \le \frac{1}{\alpha_n(t-t_0)^n}\|u_0\|^2, \quad t>t_0.
\end{gather}

\item[(ii)] Let  $u(\cdot)$ be the regular solution on $(0,\infty )$,
i.e., $u(\cdot) \in L^\infty_{loc}((0,\infty);\h^1)$, with $u_0 \in \h^{1/2}(\R^3)$.
Then there exists  constants $C$ and $D_0=D(u_0)$, such that  the
infinite order energy functional 
\begin{gather}  \label{unmodifiedenergy}
{\cal E}(t) := \sum_{n=0}^{\infty} \alpha_nt^n\|(-\Delta )^{n/2}u(t)\|^2 ,\qquad  
\alpha_n := \left\{\begin{array}{ll}1, & n=0,\\ \\\displaystyle \frac{1}{C_nD_0^n}, & n \ge 0, \end{array}\right.
\end{gather}
is non-increasing for all $t >0$.  In particular,  the estimate \eqref{nsedecay}  holds with $t_0=0$.

\item[(iii)] 
If $u(\cdot)$ decays at an exponential rate, $\|u(t)\| \lesssim e^{-\lambda t}$, then 
for each $n\ge1$, 
$\|u(t)\|_{\hh^n}$ decays to zero at the same exponential rate,  
$\|u(t)\|_{\hh^n} \lesssim e^{-\lambda t}$.
\end{itemize}
\end{theorem}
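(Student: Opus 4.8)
The plan is to derive all three parts from the single $L^2$-based master estimate of Theorem~\ref{thm:main}, whose only hypothesis is the weighted decay \eqref{qdecay}. For the $3$D Navier--Stokes system \eqref{nse} one has $\vt=1$, $\beta_R=1$, $\beta_S=\beta_T=0$ and $d=3$, so the critical index is $\beta_c=\tfrac12$ and the admissible exponent range in \eqref{qdecay} is $\tfrac12<\g<2$; one checks directly that \eqref{parameter} holds for these parameters. Thus each part reduces to producing, on the relevant time interval, a constant $D_0$ and an exponent $\g\in(\tfrac12,2)$ for which \eqref{qdecay} is valid, after which the high-order decay \eqref{nsedecay} is read off from the conclusion of Theorem~\ref{thm:main}.

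For part (i) I would follow Remark~\ref{rem:largetime}, only with the $L^2$-functional in place of the $\hh^{\beta_c}$-functional. The nonlinearity $\p\nabla\cdot(u\otimes u)$ is skew-symmetric, so the Leray--Hopf energy inequality gives $\|u(t)\|^2+\int_0^t\|\Lambda u\|^2\,ds\le\|u_0\|^2$; in particular $\int_0^\infty\|\Lambda u\|^2\,ds<\infty$, whence $\liminf_{t\to\infty}\|\Lambda u(t)\|=0$. Interpolating $\|u(t)\|_{\hh^{1/2}}\le\|u(t)\|^{1/2}\|\Lambda u(t)\|^{1/2}$ against the uniform bound $\|u(t)\|\le\|u_0\|$ produces a time $t_0$ at which $\|u(t_0)\|_{\hh^{1/2}}$ is as small as we wish. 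Restarting the equation from the critical datum $u(t_0)$, weak--strong uniqueness makes $u$ coincide with the regular small-data solution for $t>t_0$ \cite{fk}, and applying Theorem~\ref{thm:gensmalldata} to this restarted solution furnishes \eqref{qdecay} for the time-shifted solution (item (i) of the remark following Theorem~\ref{thm:main}). Feeding this into Theorem~\ref{thm:main} with the clock reset at $t_0$ gives the modified functional \eqref{modifiedenergy} non-increasing for $t>t_0$ and hence \eqref{nsedecay}.

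Part (ii) is where the genuine work sits: I must verify \eqref{qdecay} on the whole half-line, i.e. produce $\g\in(\tfrac12,2)$ and $D_0$ with $\sup_{t>0}t^{(\g-1/2)/2}\|u(t)\|_{\hh^\g}\le D_0$. I would fix $\g=1$ and split the supremum into three regimes. Near $t=0$ the local mild-solution theory (item (ii) of the remark following Theorem~\ref{thm:main}), which by uniqueness agrees with our $u$, supplies the critical smoothing $\sup_{0<t<T_\ast}t^{1/4}\|u(t)\|_{\hh^1}<\infty$. On a compact middle interval $[T_\ast,t_0]$ the weight is bounded since $u\in L^\infty_{\loc}((0,\infty);\h^1)$. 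For large $t$ the bound of part (i), $\|u(t)\|_{\hh^1}^2\le(\alpha_1(t-t_0))^{-1}\|u_0\|^2$, decays like $t^{-1}$, so $t^{1/4}\|u\|_{\hh^1}\to0$. Patching the three regimes yields \eqref{qdecay} with $T=\infty$ and $t_0=0$, and Theorem~\ref{thm:main} then renders the unmodified functional \eqref{unmodifiedenergy} non-increasing on all of $(0,\infty)$.

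For part (iii) the mechanism is a restart-with-fixed-step argument exploiting the autonomy of \eqref{nse}. By part (i) the solution is smooth for $t\ge t_0$, and since $\|u(t)\|\lesssim e^{-\lambda t}$ forces $\|u(t)\|_{\hh^{1/2}}\le\|u(t)\|^{1/2}\|\Lambda u(t)\|^{1/2}\to0$, there is $t_1$ beyond which $\|u(s)\|_{\hh^{1/2}}$ is uniformly small; consequently the constants $\{\alpha_n\}$ (equivalently $D_0$) in Theorem~\ref{thm:main} may be taken independent of the base time for the solution restarted at any $s\ge t_1$. Applying \eqref{nsedecay} to the restarted solution over one unit of time gives $\alpha_n\|u(s+1)\|_{\hh^n}^2\le\|u(s)\|^2\lesssim e^{-2\lambda s}\simeq e^{-2\lambda(s+1)}$, i.e. $\|u(t)\|_{\hh^n}\lesssim e^{-\lambda t}$ for every $n\ge1$, the same rate as $\|u(t)\|$. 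The main obstacle, common to all three parts, is precisely this uniform control of the critical $\hh^{1/2}$-norm: the method applies Theorem~\ref{thm:main} to a time-shifted solution, which is legitimate only once one knows $\|u(t)\|_{\hh^{1/2}}$ is, and remains, small for large $t$, so that \eqref{qdecay} holds with a single $D_0$ and the recursive weights $\alpha_n$ are genuinely time-independent.
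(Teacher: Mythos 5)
Your proposal is correct and takes essentially the same route as the paper's proof: each part is reduced to verifying the weighted decay hypothesis \eqref{qdecay} so that Theorem \ref{thm:main} applies --- part (i) via the Leray energy inequality, interpolation, weak--strong uniqueness and a restart at large $t_0$ (Remark \ref{rem:largetime}); part (ii) by patching the critical smoothing near $t=0$, regularity on compact time intervals, and the large-time bound; part (iii) by a unit-step restart under uniform smallness of $\|u\|_{\hh^{1/2}}$. The only slip is cosmetic: in part (ii) your middle compact interval should extend to $t_0+1$ (as in the paper's choice $T=t_0+1$), so that it overlaps the region where the large-time bound is effective, since that bound degenerates as $t\downarrow t_0$.
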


\subsubsection{2D Navier-Stokes equations}\label{sec:2DNSE}
 We recall that  the 2D NS equations, admit a globally
regular solution for initial data $u_0 \in L^2(\R^2)$ \cite{LR, CF, majb}.
In this case,  we have the following regularity result.
\begin{theorem}   \label{thm:nse2d}
Let $u$ be the regular solution of \eqref{nse} on $(0,\infty)$ subject to initial data $u_0 \in L^2(\R^2) \, \cap\,\hh^{- \beta }(\R^2),\, \beta \in (0,1)$. 

\begin{itemize}
\item[(i)] The infinite order energy functional  defined above in \eqref{unmodifiedenergy} is nondecreasing in $t$
and consequently,  \eqref{nsedecay} holds with $t_0=0$. 
\item[(ii)] If $u(\cdot)$ decays to zero at an exponential rate $\|u(t)\| \lesssim e^{-\lambda t}$, then 
for each $n\ge1$, 
$\|u(t)\|_{\hh^n}$ converges to zero at the same exponential rate, i.e., 
$\|u(t)\|_{\hh^n} \lesssim e^{-\lambda t}$.
\end{itemize}
 In particular,
if $u_0 \in L^p(\R^2) \cap L^2(\R^2), 1\le p <2$, all the conclusions above hold.
\end{theorem}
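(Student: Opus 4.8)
The plan is to reduce both parts to the $L^2$-based machinery of Theorem \ref{thm:main}. For the 2D Navier--Stokes system \eqref{nse} one has $\beta_R=\vt=1$, $\beta_S=\beta_T=0$ and $d=2$, so the critical index is $\bc=\beta_R+\beta_S+\beta_T+\frac d2-2\vt=0$; consequently $\hh^{\bc}=L^2$, the nonlinearity is skew-symmetric (as $\nabla\cdot u=0$), and the functional in \eqref{unmodifiedenergy} is precisely the $L^2$-based functional of Theorem \ref{thm:main}. One checks directly that \eqref{parameter} holds. Hence, once we verify the decay hypothesis \eqref{qdecay}, which for $\bc=0,\vt=1$ reads $\sup_{0<t<\infty}t^{\gamma/2}\|u(t)\|_{\hh^{\gamma}}\le D_0$ for some $0<\gamma<2$, part (i) follows immediately from Theorem \ref{thm:main}, yielding the non-increasing functional and the higher-order decay \eqref{nsedecay} at $t_0=0$. (This is the strengthening, to $t_0=0$, of the late-time scenario of Remark \ref{rem:largetime}.)

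To verify \eqref{qdecay} I would argue in three steps, taking $\gamma=1$. First, I would show the negative norm is controlled uniformly in time, $\sup_{t\ge0}\|u(t)\|_{\hh^{-\beta}}\le M$: pairing \eqref{nse} against $\Lambda^{-2\beta}u$ gives
\[
\tfrac12\dt\|u\|_{\hh^{-\beta}}^2+\|u\|_{\hh^{1-\beta}}^2=-\big(\Lambda^{-\beta}u,\,\Lambda^{-\beta}\p\nabla\cdot(u\otimes u)\big),
\]
and for $\beta\in(0,1)$ the nonlinear term is absorbed by the dissipation $\|u\|_{\hh^{1-\beta}}^2$ after a 2D product/Sobolev estimate, using the a priori bound $\|u(t)\|\le\|u_0\|$. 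Second, a Fourier-splitting (Schonbek) argument on the energy identity $\dt\|u\|^2=-2\|\nabla u\|^2$, using $\int_{|\xi|\le\rho}|\widehat u|^2\le\rho^{2\beta}\|u\|_{\hh^{-\beta}}^2\le M^2\rho^{2\beta}$ and $\rho^2=k/(1+t)$, yields the algebraic decay $\|u(t)\|\lesssim(1+t)^{-\beta/2}$. Third, the parabolic smoothing estimate for the globally regular 2D solution, $\|u(t)\|_{\hh^{1}}\lesssim(t-s)^{-1/2}\|u(s)\|$, applied with $s=t/2$ and combined with this $L^2$-decay, gives $t^{1/2}\|u(t)\|_{\hh^1}\lesssim(1+t)^{-\beta/2}$ for large $t$, while near $t=0$ the same estimate with $\|u(t/2)\|\le\|u_0\|$ gives $t^{1/2}\|u(t)\|_{\hh^1}\lesssim\|u_0\|$. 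Thus \eqref{qdecay} holds with a finite $D_0$ depending only on $\|u_0\|$ and $\|u_0\|_{\hh^{-\beta}}$.

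For part (ii), the bound \eqref{nsedecay} only gives polynomial decay, so I would upgrade it by restarting the functional: since $D_0$ is uniform in time, Theorem \ref{thm:main} applied on $(s,\infty)$ with datum $u(s)$ shows that $\sum_n\alpha_n(t-s)^n\|(-\Delta)^{n/2}u(t)\|^2$ is non-increasing, whence $\|u(t)\|_{\hh^n}^2\le\alpha_n^{-1}(t-s)^{-n}\|u(s)\|^2$. Choosing $s=t-1$ for $t>1$ and inserting $\|u(t-1)\|\lesssim e^{-\lambda(t-1)}$ gives $\|u(t)\|_{\hh^n}\lesssim e^{-\lambda t}$, so the higher norms inherit the exponential rate; for $0<t\le1$ one uses the plain bound. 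Finally, the ``in particular'' clause follows from the embedding $L^p(\R^2)\cap L^2(\R^2)\hookrightarrow\hh^{-\beta}(\R^2)$ valid for $1\le p<2$ with $\beta=2(\tfrac1p-\tfrac12)\in(0,1)$ (and any $\beta\in(0,1)$ when $p=1$), which places $u_0$ in the class already treated.

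The main obstacle is the uniform negative-Sobolev bound of the first step. For \emph{large} $L^2$ data one must genuinely control the quadratic nonlinearity in $\hh^{-\beta}$, and it is exactly the restriction $\beta\in(0,1)$ that lets this estimate close against the dissipation $\|u\|_{\hh^{1-\beta}}^2$ in two space dimensions. Once the decay $\|u(t)\|\lesssim(1+t)^{-\beta/2}$ is in hand, the remaining ingredients --- the Fourier splitting, the 2D smoothing estimate, and the two invocations of Theorem \ref{thm:main} --- are comparatively routine.
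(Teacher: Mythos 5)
Your skeleton --- compute $\bc=0$, check \eqref{parameter}, verify the decay hypothesis \eqref{qdecay}, invoke Theorem \ref{thm:main}, and prove part (ii) by restarting the functional at $s=t-1$ --- coincides with the paper's strategy (the restart device is exactly how the paper proves part (iii) of Theorem \ref{thm:nse}). The genuine gap is in your Step 1. The trilinear term $(B(u,u),(-\Delta)^{-\beta}u)$ is scale-critical with respect to the dissipation $\|u\|^2_{\hh^{1-\beta}}$: in $d=2$ both scale as $\lambda^{2-2\beta}$ under $u\mapsto\lambda u(\lambda\,\cdot)$, so any factor multiplying the full dissipation must have scaling degree zero, and the only degree-zero quantity you control uniformly in time is $\|u\|_{L^2}\le\|u_0\|$. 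Indeed, the sharpest ``absorption'' estimate obtainable from \eqref{convineq} and duality is
\[
|(B(u,u),(-\Delta)^{-\beta}u)|\le C\,\|u\|_{L^2}\,\|u\|^2_{\hh^{1-\beta}}
\]
(bound $\|u\otimes u\|_{\hh^{1-2\beta}}\lesssim\|u\|^2_{\hh^{1-\beta}}$ and pair against $\|\nabla(-\Delta)^{-\beta}u\|_{\hh^{2\beta-1}}=\|u\|_{L^2}$), and this closes against the dissipation only if $C\|u_0\|<1$, i.e.\ for small data --- whereas the theorem concerns arbitrary $u_0\in L^2\cap\hh^{-\beta}$. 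The paper's proof of the uniform bound \eqref{unifbd1} is structured precisely to avoid this: it bounds the term by $C\|u\|_{\hh^{-\beta}}\|u\|_{\hh^{1-\beta}}\|u\|_{\hh^{1}}$, splits off $\tfrac12\|u\|^2_{\hh^{1-\beta}}$ by Young, and controls the leftover $C\|u\|^2_{\hh^{-\beta}}\|\nabla u\|^2$ by \emph{Gronwall} combined with Leray's energy inequality $\int_0^\infty\|\nabla u\|^2\,dt\le\tfrac12\|u_0\|^2$, yielding $\|u(t)\|^2_{\hh^{-\beta}}\le e^{C\|u_0\|^2}\|u_0\|^2_{\hh^{-\beta}}$. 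Your Step 1 should be replaced by this argument.

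The rest of your route is sound but genuinely different from the paper's, and in fact it makes Steps 1--2 redundant. The 2D smoothing estimate of Step 3, $\|u(t)\|_{\hh^1}\lesssim(t-s)^{-1/2}\|u(s)\|$, does hold for large data (it follows from the energy identity together with the 2D enstrophy monotonicity $\|\nabla u(t)\|=\|\omega(t)\|\searrow$, a fact you invoke without justification and which is not in the paper); applied with $s=t/2$ together with the plain bound $\|u(t/2)\|\le\|u_0\|$, it already gives $\sup_{t>0}t^{1/2}\|u(t)\|_{\hh^1}\lesssim\|u_0\|$, i.e.\ \eqref{qdecay} with $\g=1$, for all $t$ and uniformly under time shifts --- which is all that parts (i) and (ii) require. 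Your Fourier-splitting Step 2 is mathematically legitimate and even yields $\|u(t)\|^2\lesssim(1+t)^{-\beta}$, stronger than the paper's \eqref{2dnseenergydecay}, but it imports exactly the Fourier-based technology the paper is designed to avoid; the paper instead derives $\liminf_{t\to\infty}\|u(t)\|=0$ from \eqref{unifbd1} by interpolating against $\liminf_{t\to\infty}\|u(t)\|_{\hh^1}=0$, restarts the small-data theory of Theorem \ref{thm:localexistence} at a late time $t_0$, and patches the time intervals to obtain \eqref{2dnsedecaycond}. Your treatment of the ``in particular'' clause via $\|u_0\|_{\hh^{-\beta}}\le\|u_0\|_{L^p}$, $\beta=2(\tfrac1p-\tfrac12)$, matches the paper's.
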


\begin{remark}\label{rem:1}
{\em 
Sharp high-order decay estimates \eqref{nsedecay} for large times, $t\gg 1$, were  obtained earlier in \cite{S5,SW,Ti}, under additional assumptions of initial integrability, e.g., $u_0 \in L^1 \cap L^2$ in \cite{S1,S5}, or algebraic decay, e.g.,
$ \|u(t)\|_{L^2} \lesssim (1+t)^{-\mu}$ in \cite{S5a} (the latter follows from the  former ---  $u_0 \in L^1 \cap L^2$ implies $L^2$ decay with $\mu=1/2$). 
In particular, under the  assumption of algebraic decay of  
$\|u(t)\|_{L^2}$, the high-order decay estimate  \eqref{nsedecay} with an  ``optimal'' constant (of the order of $n^n$) 
was derived in \cite{Ti}  using Gevrey class techniques \cite{FT}. 
The 2D estimate  \eqref{nsedecay} for $L^p(\R^2) \cap L^2(\R^2), 1\le p <2$ initial data can be found in 
 \cite{S5, Ti},  while the  decay result for 
$u_0 \in \hh^{- \beta} \cap L^2(\R^2)$ can be found in \cite{biswas}.    

Here, the high-order decay estimates \eqref{nsedecay} hold for general $L^2$ data and for all times,  as long as the solution remains regular  for  $t>t_0$. As before, our main focus is the  new approach  based on the use of an infinite-order
energy functional, which is independent of Fourier-based arguments (as in e.g., \cite{S5a}). This, in turn,  enables us to pursue a unified framework for analysis  the time decay of  a large  class of dissipative equations with quadratic nonlinearities.
} 
\end{remark}

\subsection{2D surface quasi-geostrophic equations}\label{subsec:QG}
The 2D surface 
quasi-geostrophic equation given by
\begin{gather}  \label{2dqg}
\eta_t + u \cdot \nabla \, \eta = -(-\Delta)^{\vt} \eta ,\ 0 < \vt \le 1,\quad u := (-{\cal R}_2,\cal{R}_1) \eta ,
\end{gather}
where ${\cal R}_i$ are the two-dimensional Riesz transforms, $\widehat{\cal R}_i(\xi)={\xi_i}/{|\xi|}$. 
This equation, which is of the form (\ref{quadeqn}) with $ R \mapsto I, S \mapsto {\cal R}, T \mapsto \nabla$, 
is an important model
in geophysical fluid dynamics and has received considerable attention recently; 
see for instance \cite{CW}, \cite{dong}  and the references therein. 
The subcritical and supercritical cases correspond to dissipation of order  $\frac 12 < \vt \le 1$ and $0 < \vt < \frac 12$ respectively.
The critical quasi-geostrophic equation, 
corresponding to $\vt = \frac 12$ is the two dimensional analogue of the 3D Navier-Stokes equations. 
The global well-posedness of this equation has been proven only recently \cite{cv, k}. We  focus here on the subcritical case.

\begin{theorem}  \label{thm:qg}
Let $\frac{2}{3} \le \vt \le 1,\ \delta_0>0$ and consider the solution $\eta$ of the 2D QG equation (\ref{2dqg}) subject to initial data 
$\eta_0 \in \h^{2-2\vt + \delta_0}(\R^2)$.
Then, there exist    constants  $D_0=D(\eta_0)$ and $C_n$ as in \eqref{katoponce},
such that the infinite order energy  functional,
\begin{gather*}  
{\cal E}(t) := \sum_{n=0}^{\infty} \alpha_nt^n\|(-\Delta )^{n\vt/2}u(t)\|_{L^2}^2, \qquad 
\alpha_n= \left\{\begin{array}{ll} 1, & n=0,\\ \\ \displaystyle \frac{1}{C_nD_0^n}, & n \ge 0.\end{array}\right.
\end{gather*}
  is non-increasing for all $t>0$.
Moreover, for  sufficiently large $t$, we also have $\|\eta(t)\|_{\hh^n}^2 = O(\|\eta(t)\|^2)$. 
In particular,
if $\eta(t)$ decays to zero at an exponential rate, $\|\eta(t)\| \lesssim e^{-\lambda t}$, then so are its spatial derivatives --- for each $n\ge 1$ 
$\|\eta(t)\|_{\hh^n} \lesssim e^{-\lambda t}$. 
\end{theorem}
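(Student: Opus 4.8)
The plan is to recognize \eqref{2dqg} as a skew-symmetric instance of \eqref{quadeqn}, verify the a priori decay hypothesis \eqref{qdecay} of Theorem~\ref{thm:main} for a suitable intermediate index $\gamma$, and then read off the conclusion directly from that theorem. First I would recast the equation: since the drift $u=(-\mathcal{R}_2,\mathcal{R}_1)\eta$ is divergence free, $u\cdot\nabla\eta=\nabla\cdot(u\eta)$, so \eqref{2dqg} becomes $\eta_t+(-\Delta)^\vt\eta=B(\eta,\eta)$ with $B(\eta,w):=-\nabla\cdot\big((\mathcal{R}^\perp\eta)\,w\big)$, which is of the form \eqref{eq:quadn} with $R=-\nabla\cdot$, $S=\mathcal{R}^\perp$, $T=I$; thus $\br=1$, $\bs=\bt=0$, and $\bc=\br+\bs+\bt+\tfrac d2-2\vt=2-2\vt$. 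Skew-symmetry is immediate, $(B(\eta,w),w)=\tfrac12(u,\nabla(w^2))=-\tfrac12(\nabla\cdot u,w^2)=0$, and a direct substitution shows that for $\br=1,\bs=\bt=0,d=2$ the structural condition \eqref{parameter} reduces to $\tfrac23<\vt<2$, valid on $(\tfrac23,1]$. Since $\mathcal{R}^\perp$ preserves every homogeneous $L^2$-Sobolev norm, $\|u(t)\|_{\hh^s}=\|\eta(t)\|_{\hh^s}$ for all $s$, so the functional in the statement (written in $u$) coincides with the one produced by Theorem~\ref{thm:main} (written in $\eta$).

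Next I would establish \eqref{qdecay} for some $\bc<\gamma<2\vt$ (a nonempty range, since $\bc=2-2\vt<2\vt$). Subcritical ($\vt>\tfrac12$) global regularity for $\eta_0\in\hh^{\bc+\delta_0}$ is standard, and skew-symmetry gives the $L^2$-balance $\|\eta(t)\|\le\|\eta_0\|$ together with $\int_0^\infty\|\Lambda^\vt\eta\|^2\,ds<\infty$; since $0\le\bc\le\vt$ on $\tfrac23\le\vt\le1$, interpolation yields $\liminf_{t\to\infty}\|\eta(t)\|_{\hh^{\bc}}=0$, exactly as in Remark~\ref{rem:largetime}. Hence at some late time $t_0$ the critical norm $\|\eta(t_0)\|_{\hh^{\bc}}$ is as small as Theorem~\ref{thm:gensmalldata} requires; applying that theorem from $t_0$ gives $\|\eta(t)\|_{\hh^{\vt+\bc}}^2\lesssim(t-t_0)^{-1}\|\eta(t_0)\|_{\hh^{\bc}}^2$, and interpolating against the bounded critical norm produces $\|\eta(t)\|_{\hh^\gamma}\lesssim(t-t_0)^{-(\gamma-\bc)/2\vt}$, so that $t^{(\gamma-\bc)/2\vt}\|\eta(t)\|_{\hh^\gamma}$ stays bounded for $t\ge2t_0$. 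On the complementary interval $(0,2t_0]$ local parabolic smoothing of the subcritical solution makes $t^{(\gamma-\bc)/2\vt}\|\eta(t)\|_{\hh^\gamma}$ bounded as well. Combining the two regimes gives \eqref{qdecay} with a constant $D_0=D(\eta_0)$, and Theorem~\ref{thm:main} then yields the monotonicity of ${\cal E}(t)$ on $(0,\infty)$ together with the asserted higher-order decay, $C_n$ being the Kato--Ponce constants of \eqref{katoponce}.

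For the large-time comparison and the exponential transfer I would use a restart over a fixed window. Given $\tau>0$ and large $t$, apply Theorem~\ref{thm:main} to the shifted solution $r\mapsto\eta(t-\tau+r)$ --- whose hypothesis \eqref{qdecay} holds from any late starting time by the previous paragraph --- and use monotonicity of the shifted functional to get $\alpha_n\tau^n\|\Lambda^{n\vt}\eta(t)\|^2\le\|\eta(t-\tau)\|^2$, i.e. $\|\eta(t)\|_{\hh^{n\vt}}^2=O(\|\eta(t-\tau)\|^2)$, which is the claimed $O(\|\eta(t)\|^2)$ up to a fixed shift. If $\|\eta(t)\|\lesssim e^{-\lambda t}$ then $\|\eta(t-\tau)\|^2\lesssim e^{2\lambda\tau}e^{-2\lambda t}$, so $\|\eta(t)\|_{\hh^{n\vt}}\lesssim e^{-\lambda t}$ with the \emph{same} rate $\lambda$ (the constant depending on $\tau$); interpolating between consecutive norms $\hh^{k\vt}$ and $\hh^{(k+1)\vt}$ then upgrades this to every integer order $\|\eta(t)\|_{\hh^n}\lesssim e^{-\lambda t}$.

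I expect the decisive step to be the verification of \eqref{qdecay}: marrying global subcritical regularity to the eventual smallness of the critical norm so as to extract the precise self-similar rate $\|\eta(t)\|_{\hh^\gamma}\lesssim t^{-(\gamma-\bc)/2\vt}$, after which the telescoping and monotonicity machinery of Theorem~\ref{thm:main} runs automatically. A secondary difficulty is the borderline dissipation $\vt=\tfrac23$, where \eqref{parameter} is only marginally satisfied; there one must exploit that $\bs=\bt=0$ removes the worst nonlinear term so that the required interpolation still closes. The short-time boundedness on $(0,2t_0]$ and the $L^2$-isometry reductions are then routine.
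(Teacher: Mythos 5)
Your proposal follows the paper's own route: recognize \eqref{2dqg} as an instance of \eqref{quadeqn} with critical index $\bc=2-2\vt$, verify the decay hypothesis \eqref{qdecay} of Theorem \ref{thm:main} by combining subcritical global regularity (the paper cites Theorem 2.1 of \cite{CW}) with the eventual smallness of $\|\eta(t)\|_{\hh^{\bc}}$ supplied by Remark \ref{rem:largetime} and Theorem \ref{thm:gensmalldata} (the paper says to proceed as in the proof of Theorem \ref{thm:nse2d}), and then transfer exponential decay by restarting the infinite order energy functional over a fixed window, exactly as in part (iii) of Theorem \ref{thm:nse}.

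The one substantive discrepancy is your factorization of the nonlinearity, and it costs you the endpoint $\vt=\frac23$, which the theorem includes. Writing $u\cdot\nabla\eta=\nabla\cdot(u\eta)$ with $R=-\nabla\cdot$, $S={\cal R}^\perp$, $T=I$ gives $\br=1$, $\bs=\bt=0$, and the first constraint of \eqref{parameter} then reads $\vt>\frac23\br+\frac{\bs+\bt}{3}=\frac23$, which fails at $\vt=\frac23$; you noticed this, but your suggested repair --- ``exploit that $\bs=\bt=0$ removes the worst nonlinear term'' --- is only a gesture, not an argument. The paper avoids the problem by keeping the nonlinearity in non-divergence form $({\cal R}^\perp\eta)\cdot\nabla\eta$, i.e.\ taking $R=I$ with the derivative and the Riesz transform as the \emph{inner} multipliers, so that $\br=0$ and $\{\bs,\bt\}=\{1,0\}$. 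The critical index is unchanged, $\bc=1+\frac{d}{2}-2\vt=2-2\vt$, but \eqref{parameter} now reduces to $\frac12<\vt<\frac32$, which covers all of $[\frac23,1]$. So the clean fix for your endpoint case is not a sharper nonlinear estimate but simply a redistribution of the derivative from the outer multiplier $R$ onto one of the inner multipliers; with that single change your argument coincides with the paper's proof.
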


\subsection{Keller-Segel model }\label{subsec:kellersegel}
We consider the Keller-Segel model,
\begin{gather}  \label{ksmodel}
\rho_t = \nabla \cdot \left(\rho u\right) + \Delta \rho,\qquad u = \nabla \Delta^{-1}\rho.
\end{gather}
This model is of the form (\ref{quadeqn}) with $\vt \mapsto 1, R \mapsto \nabla, S \mapsto I$ and $T \mapsto \nabla \Delta^{-1}$. It describes the collective motion of cells (usually bacteria or amoeba) that are attracted by a chemical substance 
and are able to emit it (see \cite{ksm}). Here $\rho $ is the cell concentration and $u$ is the drift velocity.
There has been a large amount of recent activity devoted to this model (see \cite{per1} and the references there in).
In particular, it was shown that the Keller-Segel equation admits a strong solution  if  $\|\rho_0\|_{L^{d/2}(\R^d)}$  
is sufficiently small, but on the other hand,
the solution experiences a finite-time blow-up (converges to Dirac delta) if the initial $L^d$-norm is larger than a critical value
(see \cite{per, horst} for $d=2$). In the framework of sufficiently small data, we have the following higher order smoothness result.
 \begin{theorem}   \label{thm:ksmodel}
Consider the $d$-dimensional Keller-Segel equation \eqref{ksmodel}, $d=2,3$, subject to sufficiently small initial data $\rho_0$ with $\|\rho_0\|_{\hh^{\frac{d}{2} -2}} \ll 1$.
 Then, there exists a global solution $\rho(\cdot,t)$ such that the  
  infinite order energy functional (corresponding to  \eqref{eqs:higherenergy} with $\vt=1$)
\[
{\cal E}(t) = \sum_{n=0}^{\infty} \alpha_nt^n\|(-\Delta)^{n/2}\rho(t)\|_{\hh^{\beta_c}}^2, \quad \alpha_0=1, \ \beta_c=\frac{d}{2}-2
\]
 is non-increasing for all $t$. In particular, we have the high-order decay estimate
\be
\|\rho(t)\|_{\hh^{n+\frac{d}{2}-2}}^2 \le \frac{1}{\alpha_nt^n}\|\rho_0\|^2_{\hh^{\frac{d}{2}-2}}
\ee
\end{theorem}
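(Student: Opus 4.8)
The plan is to realize the Keller–Segel system \eqref{ksmodel} as a special instance of the abstract quadratic problem \eqref{quadeqn} and then invoke Theorem \ref{thm:gensmalldata}; essentially all of the analytic content already lives in that theorem, so the task here is the bookkeeping needed to check its hypotheses. Writing the drift as $u=\nabla\Delta^{-1}\rho$ and moving the diffusion to the left, \eqref{ksmodel} becomes $\rho_t+(-\Delta)^{\vt}\rho=B(\rho,\rho)$ with $\vt=1$ and $B(\rho,\rho)=\nabla\cdot\big(\rho\,\nabla\Delta^{-1}\rho\big)=R(S\rho\otimes T\rho)$, where $R=\nabla\cdot$, $S=I$, $T=\nabla\Delta^{-1}$. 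These are homogeneous Fourier multipliers of orders $\br=1$, $\bs=0$, $\bt=-1$, so the critical index is $\bc=\br+\bs+\bt+\tfrac d2-2\vt=\tfrac d2-2$, matching the statement. For $d=2,3$ this index is negative, $\bc\in\{-1,-\tfrac12\}$, but I note that Theorem \ref{thm:gensmalldata} imposes no sign restriction on $\bc$, so this is not an obstruction to quoting it.

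Next I would verify the two structural hypotheses. The operator bounds \eqref{operassump} are immediate: $\nabla$ and $\nabla\cdot$ map $\hh^{\beta+1}\to\hh^{\beta}$ boundedly, while $\nabla\Delta^{-1}$ maps $\hh^{\beta-1}\to\hh^{\beta}$, uniformly in $\beta$ on the homogeneous scale, since each is a composition of derivatives with Riesz transforms whose symbols are smooth off the origin and homogeneous of the indicated degree. For the parameter constraints \eqref{parameter} I substitute $\br=1,\bs=0,\bt=-1$: the lower bound reads $\vt>\max\{\tfrac13,\tfrac12,0,\tfrac d8\}=\tfrac12$ for both $d=2,3$, and the upper bound reads $\vt<\min\{\,\br+\min\{\bs,\bt\}+d,\ \br+\tfrac{\bs+\bt}{2}+\tfrac d2\,\}=\min\{d,\tfrac{d+1}{2}\}$, i.e. $\vt<\tfrac32$ for $d=2$ and $\vt<2$ for $d=3$. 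Since $\vt=1$ lies strictly inside these ranges in both dimensions, \eqref{parameter} holds.

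Finally I would supply the global solution to which the a priori estimate attaches. The Keller–Segel nonlinearity is \emph{not} skew-symmetric — indeed $(B(\rho,\rho),\rho)=\tfrac12\int_{\R^d}\rho^3\,dx\neq 0$, consistent with the known finite-time blow-up for large mass — so the large-data route of Theorem \ref{thm:main} and Remark \ref{rem:largetime} is unavailable and the smallness of $\|\rho_0\|_{\hh^{\bc}}$ is genuinely needed. Existence of a global strong solution for small data in the scale-critical space $\hh^{\bc}=\hh^{d/2-2}$ is provided by the mild-solution construction of the Appendix (of Fujita–Kato/Weissler type). With such a solution in hand, Theorem \ref{thm:gensmalldata} applies verbatim and delivers both the monotonicity of $\mathcal{E}(t)$ and the decay bound $\|\rho(t)\|_{\hh^{n+d/2-2}}^2\le(\alpha_nt^n)^{-1}\|\rho_0\|_{\hh^{d/2-2}}^2$.

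The main obstacle is conceptual rather than computational: because $\bc=\tfrac d2-2<0$ for $d=2,3$, one works at \emph{negative} critical regularity, so I must first secure a global strong solution whose higher norms $\|\rho(t)\|_{\hh^{n\vt+\bc}}$ are finite for $t>0$ — via the small-data critical existence and parabolic smoothing of the Appendix — before the formal pairings underlying $\mathcal{E}(t)$ can be justified through the approximation-and-passage-to-the-limit scheme used elsewhere in the paper. I would stress that the restriction to $d=2,3$ reflects the physical model and the cited existence theory rather than \eqref{parameter} itself, which for $\vt=1$ in fact remains valid over a wider range of dimensions.
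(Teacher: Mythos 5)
Your proposal is correct and follows exactly the paper's route: the paper's proof of Theorem \ref{thm:ksmodel} is simply ``This follows immediately from Theorem \ref{thm:gensmalldata},'' and your verification of \eqref{operassump}, of the parameter constraints \eqref{parameter} (with $\br=1$, $\bs=0$, $\bt=-1$, $\vt=1$, giving the admissible range $\tfrac12<\vt<\tfrac{d+1}{2}$), and of the existence input from the Appendix supplies precisely the bookkeeping the paper leaves implicit. Your added observation that $(B(\rho,\rho),\rho)=\tfrac12\int\rho^3\,dx\neq0$, so the skew-symmetric machinery of Theorem \ref{thm:main} and Remark \ref{rem:largetime} is unavailable and smallness is genuinely required, is a correct and worthwhile clarification consistent with the paper's remarks on blow-up.
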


\begin{remark}\label{rem:3}
{\em 
The result of higher order decay for the Keller-Segel model is new.
For the case $d=3$, the critical space is $L^{3/2} \subset \hh^{-\frac 12}$, i.e., $\|\rho\|_{\hh^{-\frac 12}}\lesssim \|\rho\|_{L^{3/2}}$, and hence 
 our smallness assumption on $\|\rho_0\|_{\hh^{-\frac 12 }}$ is weaker than the usual smallness assumption on
$\|\rho\|_{L^{3/2}}$.
 On the other hand, in space dimension $d=2$, both the critical spaces $L^1$ and $\hh^{-1}$ are embedded in the homogeneous 
 Besov space $B_{\infty}^{-2,\infty}$. Since the embedding of $L^1$ in $\hh^{-1}$ is no longer true, our assumption
 on the smallness of $\|\rho\|_{\hh^{-1}}$ can be regarded as a different condition guaranteeing smoothness of solutions, in addition to
 the decay of their higher Sobolev norms. The new proof provided here involves only  ``energy techniques"; no use is made of the entropy-based estimates in e.g., \cite{per}.} 
 \end{remark}

\section{Proofs of main results}\label{sec:5}
\noindent
{\bf Proof of Theorem \ref{thm:gensmalldata}}. Corresponding to the general dissipation operator of order $\vt$, we set $\Lambda:=(-\Delta)^{\vt/2}$ so that $\|u\|_{\hh^{\beta}}=\|\Lambda^{\frac{\beta}{\vt}} u\|$. 
We will  need the following lemma. 
\begin{lemma}   \label{lem:gensmalldata}
Let $u$ be a solution of \eqref{1dburger} and assume that $\|u_0\|_{\hh^{\bc}}$ is sufficiently small. Then,
 for all $t>0,\, {\D \dt \|u\|_{\hh^{\bc}}^2 \le - \|\Lambda^{ \ffc+1} u\|^2}$ and 
\[
\|u(t)\|_{\hh^{\bc}}^2 \le 
\|u_0\|_{\hh^{\bc}}^2.
\]
\end{lemma}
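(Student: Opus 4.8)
The plan is to mimic the structure of Lemma~\ref{lem:1dburger}: produce an energy identity in $\hh^{\bc}$ and then absorb the nonlinearity into the dissipation once $\|u\|_{\hh^{\bc}}$ is small. First I would pair the general equation \eqref{quadeqn} against $(-\Delta)^{\bc}u$ (equivalently, take the $\hh^{\bc}$ inner product with $u$). By self-adjointness (Plancherel), the linear term yields exactly the dissipation $\|u\|_{\hh^{\bc+\vt}}^2=\|\Lambda^{\ffc+1}u\|^2$, giving
\begin{equation*}
\frac12\dt\|u\|_{\hh^{\bc}}^2 + \|\Lambda^{\ffc+1}u\|^2 = \big(B(u,u),u\big)_{\hh^{\bc}}.
\end{equation*}
Everything then reduces to the trilinear bound
\begin{equation*}
\big|\big(B(u,u),u\big)_{\hh^{\bc}}\big| \le C\,\|u\|_{\hh^{\bc}}\,\|\Lambda^{\ffc+1}u\|^2,
\end{equation*}
whose scaling is dictated precisely by the critical choice $\bc=\br+\bs+\bt+\tfrac d2-2\vt$ (the cubic form is homogeneous of exactly the right degree for this inequality to hold).

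For this estimate I would split the frequency weight $|\xi|^{2\bc}=|\xi|^{\bc-\vt}|\xi|^{\bc+\vt}$ and apply Cauchy--Schwarz to get $\big|(B(u,u),u)_{\hh^{\bc}}\big|\le\|B(u,u)\|_{\hh^{\bc-\vt}}\|u\|_{\hh^{\bc+\vt}}$, which isolates one factor of the dissipation $\|u\|_{\hh^{\bc+\vt}}=\|\Lambda^{\ffc+1}u\|$. It then remains to prove
\begin{equation*}
\|B(u,u)\|_{\hh^{\bc-\vt}}=\|R(Su\otimes Tu)\|_{\hh^{\bc-\vt}}\le C\,\|u\|_{\hh^{\bc}}\,\|u\|_{\hh^{\bc+\vt}}.
\end{equation*}
Using the multiplier bound \eqref{operassump} for $R$ moves this to a product estimate for $Su\otimes Tu$ in $\hh^{\bc-\vt+\br}$; a fractional Leibniz (Kato--Ponce) inequality together with the Sobolev embeddings $\hh^{\sigma}\hookrightarrow L^p$, followed by the multiplier bounds \eqref{operassump} for $S$ and $T$ and an interpolation of the resulting intermediate norms between $\hh^{\bc}$ and $\hh^{\bc+\vt}$ (exactly as $\|\Lambda^{1/4}u\|$ was interpolated in Lemma~\ref{lem:1dburger}), should close the bound. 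I expect this product/interpolation step to be the main obstacle: one has to verify that the several Sobolev exponents produced stay in the admissible (subcritical, nonnegative) ranges and that the Leibniz rule applies at the relevant regularities. This is exactly what the list of inequalities in \eqref{parameter} is designed to guarantee — the first (lower) condition on $\vt$ ensuring that the nonlinearity is dominated by a sufficient amount of dissipation, and the remaining, more technical upper bounds keeping the exponents valid.

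Granted the trilinear bound, the energy identity becomes
\begin{equation*}
\frac12\dt\|u\|_{\hh^{\bc}}^2 + \big(1-C\|u\|_{\hh^{\bc}}\big)\|\Lambda^{\ffc+1}u\|^2 \le 0.
\end{equation*}
I would then close with a continuity argument: if $\|u_0\|_{\hh^{\bc}}$ is small enough that $C\|u_0\|_{\hh^{\bc}}\le\tfrac12$, then on the maximal interval where $\|u(t)\|_{\hh^{\bc}}\le\|u_0\|_{\hh^{\bc}}$ the coefficient $1-C\|u\|_{\hh^{\bc}}\ge\tfrac12$ stays positive, forcing $\dt\|u\|_{\hh^{\bc}}^2\le 0$ and hence $\|u(t)\|_{\hh^{\bc}}\le\|u_0\|_{\hh^{\bc}}$; the smallness bound is therefore self-improving and propagates for all $t>0$. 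On that interval the identity also yields $\dt\|u\|_{\hh^{\bc}}^2\le -\|\Lambda^{\ffc+1}u\|^2$, which is the stated conclusion. As in the warm-up, these are formal \emph{a priori} estimates, first carried out on smooth approximations and then passed to the limit.
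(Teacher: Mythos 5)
Your overall architecture --- the $\hh^{\bc}$ energy identity, the reduction to the trilinear bound $|(B(u,u),u)_{\hh^{\bc}}|\le C\|u\|_{\hh^{\bc}}\|\Lambda^{\ffc+1}u\|^2$, absorption into the dissipation, and the closing continuity argument --- matches the paper's proof. The genuine gap is in the mechanism you propose for the trilinear bound. By fixing the Cauchy--Schwarz splitting $|\xi|^{2\bc}=|\xi|^{\bc-\vt}|\xi|^{\bc+\vt}$, you place the full dissipation weight $\hh^{\bc+\vt}$ on the lone factor of $u$ and are then forced to estimate $\|Su\otimes Tu\|_{\hh^{\bc+\br-\vt}}$. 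Counting regularity, the product estimate you need is \eqref{convineq} with total input regularity $\vt_1+\vt_2=\bc+\br+\tfrac d2-\vt=2\br+\bs+\bt+d-3\vt$, and \eqref{convineq} requires this to be \emph{strictly positive}. The hypothesis \eqref{parameter} does not guarantee that: it only gives $2\vt<2\br+\bs+\bt+d$, so the quantity can be zero or negative. Concretely, in three of the paper's applications --- Burgers' equation ($d=1$, $\vt=\br=1$, $\bs=\bt=0$, $\bc=-\tfrac12$), the subcritical QG equation at $\vt=1$ ($\br=\bt=0$, $\bs=1$, $d=2$), and the 2D Keller--Segel model ($\br=1$, $\bs=0$, $\bt=-1$, $\vt=1$) --- one has $\bc+\br+\tfrac d2-\vt=0$, exactly the endpoint at which the bilinear estimate $\hh^{\vt_1}\times\hh^{\vt_2}\to\hh^{-d/2}$ fails. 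Nor is this a removable technicality: for Burgers your intermediate quantity is $\|u^2\|_{\hh^{-1/2}(\R)}$, which equals $+\infty$ for \emph{every} nonzero real $u\in L^2(\R)$, since $u^2\in L^1$ makes $\widehat{u^2}$ continuous with $\widehat{u^2}(0)=\|u\|_{L^2}^2>0$, while $|\xi|^{-1}$ is non-integrable at the origin in one dimension. (Also, at these negative orders the Kato--Ponce/Leibniz rule you invoke does not apply; one must use the product estimate \eqref{convineq}, which is precisely where the positivity constraint bites.)

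The paper's proof avoids this by keeping the splitting parameter free: it pairs the nonlinearity as $(\Lambda^{\ffc-\eps}R(Su\otimes Tu),\Lambda^{\ffc+\eps}u)$ with $0\le\eps\le1$ at its disposal, so the product estimate is applied at total regularity $\bc+\br+\tfrac d2-\eps\vt>0$; the conditions \eqref{parameter} are exactly what make an admissible choice of $\eps$ and of the exponents $\delta_0,\zeta_0$ (with $\delta_0+\bt,\ \zeta_0+\bs\in[\bc,\bc+\vt]$, $\max\{\delta_0,\zeta_0\}<\tfrac d2$, $\delta_0+\zeta_0>0$) possible. Interpolation then recombines everything into your target inequality, via $\|u\|_{\hh^{\bc+\eps\vt}}\cdot\|u\|_{\hh^{\bc}}^{\eps}\|u\|_{\hh^{\bc+\vt}}^{2-\eps}\le\|u\|_{\hh^{\bc}}\|u\|_{\hh^{\bc+\vt}}^{2}$, so the asymmetric splitting costs nothing in the final bound. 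This is also what the warm-up Lemma \ref{lem:1dburger} does implicitly: the Hilbert-transform step there corresponds to $\eps=\tfrac12$, not $\eps=1$. Your energy identity, the self-improving smallness argument, and the passage from smooth approximations are fine as stated; only the splitting needs to be made flexible.
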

\begin{proof}
Recall that $\|u\|_{\hh^{\bc}}^2=\|\Lambda^{\ffc}u\|^2$.
Taking $L^2-$inner product of \eqref{1dburger} with $\Lambda^{2\ffc} u$, we obtain for
any $0 \le \eps \le 1$
\bea
\frac{1}{2} \dt \|u\|_{\hh^{\bc}}^2 + \|\Lambda^{\ffc +1} u\|^2 &= & (\Lambda^{\ffc -\eps}R(Su\otimes Tu),\Lambda^{\ffc+\eps}u)\nn \\
&  \lesssim & \|\Lambda^{\ffc+\eps}u\|\|\Lambda^{\ffc +\ffr-\eps}(Su\otimes Tu)\|. \label{energyineq}
\eea
Due to \eqref{parameter}, there exists a choice of constants $\delta_0, \eps \in \R$, such that for 
\[
\zeta_0 := \bc +\br + \frac d2 -\eps \vt -\delta_0,
\] 
 the following inequalities are satisfied:
\[
\max\{\delta_0, \zeta_0\} < \frac d2,  \delta_0 +\zeta_0>0, \bc \le \delta_0+\bt \le \bc + \vt\  \mbox{and}\ 
\bc \le \zeta_0 + \bs \le \bc + \vt.
\]

 We will  need the following inequality for the homogeneous Sobolev norm of the product of two functions
(see \cite{ker}, \cite{rs}),  namely,
 \begin{gather}  \label{convineq}
 \|fg\|_{\hh^{\vt_1 + \vt_2 - \frac d2}} \le C\|f\|_{\hh^{\vt_1}}\|g\|_{\hh^{\vt_2}}, \qquad \vt_1 + \vt_2 >0, 
 \vt_i < \frac d2, i=1,2. 
 \end{gather}
Applying this inequality to \eqref{energyineq} with $\vt_1=\alpha_0, \vt_2=\zeta_0$ followed by interpolation, we obtain
\[
\|\Lambda^{\ffc+\eps}u\|\|\Lambda^{\ffc +\ffr-\eps}(Su\otimes Tu)\| \lesssim 
\|\Lambda^{\bc +1}u\|^2\|\Lambda^{\bc}u\|.
\]
Consequently,
\bes
\frac{1}{2} \dt \|u(t)\|_{\hh^{\bc}}^2 + \|\Lambda^{\ffc+1} u(t)\|^2(1- C\|u(t)\|_{\hh^{\bc}}) \le 0.
\ees
Thus, if $\|u_0\|_{\hh^{\bc}} < \sfrac{1}{2C}$, we conclude that
$\|u(t)\|_{\hh^{\bc}}^2$ is non-increasing for all $t>0$ and the lemma follows.
\end{proof}

We will now continue with the proof of the theorem. As before, taking inner product and differentiating, 
we obtain for $n \ge 1$, 
 \bea
&  &\dt \left[\alpha_n t^n\|\Lambda^nu(t) \|_{\hh^{\bc}}^2\right] \le  n
\alpha_nt^{n-1}\|\Lambda^nu\|_{\hh^{\bc}}^2 \nn \\
& & \quad - 2\alpha_nt^n\|\Lambda^{n+1}u\|_{\hh^{\bc}}^2 
+2\alpha_nt^n \|\Lambda^{n+1}u\|_{\hh^{\bc}}\|\Lambda^{n-1+\ffc}R(Su \otimes Tu)\|. \label{iteration} \\
& &\quad \le  (n +1)\alpha_nt^{n-1}\|\Lambda^nu\|^2 -
\alpha_nt^n\|\Lambda^{n+1}u\|^2 +
2\alpha_nt^n(1+\zeta)^{\frac{1+\zeta}{1-\zeta}}c_n^{\frac{2}{1-\zeta}}\|\Lambda^{\delta}u\|^{\frac{2}{1-\zeta}}\|\Lambda^nu\|^2
\nn \\
& & \quad \le \alpha_{n-1}t^{n-1}\|\Lambda^nu\|^2 -
\alpha_nt^n\|\Lambda^{n+1}u\|^2, \nn
\eea
Note that $\|\Lambda^{n-1+\ffc}R(Su \otimes Tu)\| \lesssim \|\Lambda^{n-1+\ffc+\ffr}(Su \otimes Tu)\|$.
For convenience, we will assume  that $n-1+\frac{\bc+\br}{\vt} \ge 0, n \ge 1$ (for those values of $n$ for which $n-1+\frac{\bc+\br}{\vt} < 0$, we may proceed as in proof of Lemma \ref{lem:gensmalldata}).
Applying now \eqref{katoponce}  followed by the Sobolev inequality, we obtain
\bea
\lefteqn{ 
\|\Lambda^{n-1+\ffc+\ffr}(Su \otimes Tu)\|}  \nn \\
& & \lesssim \|\Lambda^{n-1+\frac{\br+\zeta_0+\bs+\bc}{\vt}}u\|\|\Lambda^{\frac{\delta_0+\bt}{\vt}}u\|
+\|\Lambda^{n-1+\frac{\br+\zeta_0'+\bt+\bc}{\vt}}u\|\|\Lambda^{\frac{\delta_0'+\bs}{\vt}}u\|, \label{gennonlin}
\eea
where $\delta_0+\zeta_0=\frac d2,  \delta_0'+ \zeta_0'=\frac d2$  and they moreover satisfy
\be  \label{morechoices}
\left\{\begin{array}{l}
 \bc < \delta_0+\bt<2\vt, \bc < \br+\bs+\zeta_0<2\vt, 0<\delta_0<\frac d2,\\
\\
 \bc < \delta_0'+\bs<2\vt, \bc < \br+\bt+\zeta_0'<2\vt, 0<\delta_0'<\frac d2.
\end{array}\right.
\ee
Such a choice of  constants $\delta_0, \delta_0', \zeta_0, \zeta_0'$ is possible thanks  to \eqref{parameter}.
Due to \eqref{morechoices}, it is possible to choose
 $\bc \le \g <\min\{\delta_0+\bt, \delta_0'+\bs\} $ such that
\[
\zeta := 1- \frac{\g -\bc}{\vt} > \max\{0, \frac{\br+\bs+\zeta_0}{\vt}-1, 
\frac{\br+\bt+\zeta_0'}{\vt}-1\}.
\]
 Using \eqref{gennonlin}, \eqref{iteration}, interpolation and subsequently, using Young's inequality, we obtain
\beas
&  &\dt \left[\alpha_n t^n\|\Lambda^nu\|_{\hh^{\bc}}^2\right] \le  \nn \\
& & \qquad n
\alpha_nt^{n-1}\|\Lambda^nu\|_{\hh^{\bc}}^2  - 2\alpha_nt^n\|\Lambda^{n+1}u\|_{\hh^{\bc}}^2 
+2c_n\alpha_nt^n \|\Lambda^{n+1}u\|_{\hh^{\bc}}\|\Lambda^{\frac{\g}{\vt}}u\|
\|\Lambda^{n+\zeta}u\|_{\hh^{\bc}} \\
& & \qquad \le \alpha_nt^{n-1}\|\Lambda^nu\|_{\hh^{\bc}}^2  - 2\alpha_nt^n\|\Lambda^{n+1}u\|_{\hh^{\bc}}^2 
+2c_n\alpha_nt^n \|\Lambda^{n+1}u\|_{\hh^{\bc}}^{1+\zeta}\|\Lambda^{\frac{\g}{\vt}}u\|
\|\Lambda^n u\|_{\hh^{\bc}}^{1-\zeta}\\
& & \qquad \le \alpha_nt^{n-1}\|\Lambda^nu\|_{\hh^{\bc}}^2  - 
\alpha_nt^n\|\Lambda^{n+1}u\|_{\hh^{\bc}}^2 + 2\alpha_nt^n(1+\zeta)^{\frac{1+\zeta}{1-\zeta}}c_n^{\frac{2}{1-\zeta}}\|\Lambda^{\frac{\g}{\vt}}u\|^{\frac{2}{1-\zeta}}\|\Lambda^nu\|_{\hh^{\bc}}^2.  
\eeas
By interpolating $\|\Lambda^{\frac{\g}{\vt}}u\|$ in terms of $\|u\|_{\hh^{\bc}}$ and 
$\|\Lambda u\|_{\hh^{\bc}}$ and proceeding exactly as in the proof of Theorem \ref{thm:1dburger}, we are done.
\eop

\medskip\noindent
{\bf Proof of Theorem \ref{thm:main}.} The proof is similar to that of Theorem \ref{thm:gensmalldata} and
Theorem \ref{thm:1dburgerl2}.\\[5pt]

\noindent
{\bf Proof of Theorem \ref{thm:nse}.} Here  $A=-\Delta$ and $\Lambda=(-\Delta)^{1/2}$ corresponding to  $\vt=1$, and $R=\p \nabla \cdot$, where $\p$ is the Leray-Hopf projection operator on divergence free vector fields. 
Note that $R$ is a pseudodifferential operator of order one and satisfies (\ref{operassump}) with $\br =1$. Moreover, we let $
T=S=I$. This choice of operators yield  $ \bt=\bs=0$
and $\bc = \frac 12$.

We first prove part (i).  
From Remark \ref{rem:largetime}, it immediately follows that there exists a  
regular solution that satisfies the estimate $\sup_{t> t_0} (t-t_0)^{\frac{1}{2}(\g - 1/2)}\|u(t)\|_{\hh^\g}
< \infty $ and  moreover, it coincides with the weak solution (see \cite{CF}) for all $t\ge t_0$.
  Applying Theorem \ref{thm:main}, the claim immediately follows.

\noindent
To prove part (ii), note that the decay condition in (\ref{qdecay}) translates into
\begin{gather}  \label{nsedecaycond}
t^{\frac{1}{2}(\g - \frac 12)}\|\Lambda^{\g}u(t)\| \le D(u_0),\quad  \frac{1}{2}< \g < 2.
\end{gather}
Now,  since $u(t)$ is a regular solution on $(0,T)$ for any $T>0$ and $u_0 \in \hh^{\frac{1}{2}}$,  
for any $\g > \frac{1}{2}$ and $\delta >0$, we have (see \cite{fk}) that $\sup_{t \in [\delta , T]} 
\|u (t)\|_{\hh^{\g}} < \infty $ 
and consequently, \eqref{nsedecaycond} holds on $[\delta , T]$. To complete  the proof of part (ii), 
we only need to show \eqref{nsedecaycond} for $t \in [0,\delta]\cup [T, \infty)$.
 Let $t_0$ as defined in part (i) and $T = t_0+1$. Then by Theorem \ref{thm:localexistence}, 
for all $t >t_0$, we have 
${\D  (t-t_0)^{\frac{1}{2}(\g - \frac 12)} \|u(t)\|_{\hh^\g} \le 2 \|u(t_0)\|_{\hh^{1/2}} < \epsilon }$. 
Thus, noting
that ${\D \sup_{t \in [t_0+1,\infty)}\frac{ t^{1/2}}{(t-t_0)^{1/2}} < \infty}$, it follows that
${\D \sup_{t\in [t_0+1,\infty)} t^{\frac12(\g - \frac 12)}\|u(t)\|_{\hh^\g}  , \infty }$.
The requisite condition follows
for $t \in [T, \infty)$ by part (i). For $t \in [0,\delta]$, it follows from Theorem \ref{thm:localexistence} 
provided $\delta $ is sufficiently small.

\noindent
Finally, we  prove part (iii). Let $t_0$ be as defined in part (i). 
By Theorem \ref{thm:localexistence}, for $t \in [t_0, \infty)$, the weak solution $u(t)$ is in fact unique and strong and satisfies
$\|u(t)\|_{\hh^{1/2}} < \epsilon/2 $. Thus, for any $t \in [t_0+1,\infty )$, 
we can apply Theorem \ref{thm:gensmalldata}
 with initial data $u(t-1)$ to obtain 
${\D \sup_{s\in [0,1]}s^{\frac12(\g-1/2)}\|\Lambda^{\g - 1/2}u(s+t)\|< \epsilon }$, and Theorem \ref{thm:main} implies
${\D \|u(t)\|_{\hh^n} \le C_n\|u(t-1)\|}$. This completes the proof. \eop \\[2pt]

\noindent
{\bf Proof of Theorem \ref{thm:nse2d}}. 
The statement in the theorem concerning $u_0 \in L^p(\R^2) \cap L^2(\R^2)$ follows immediately from the first
part in view of  the inequality, e.g., \cite{Chemin}
\begin{gather*}
 \|u_0\|_{\hh^{-\beta}}\le \|u_0\|_{L^p}, \qquad \beta = 2\left(\frac{1}{p}-\frac{1}{2}\right), 
\ 1<p\le 2. 
\end{gather*}
We will now prove the remainder of the theorem.
In this case, $\bc=0$ and 
by Theorem \ref{thm:main}, it is enough to establish
\begin{gather}  \label{2dnsedecaycond}
\sup_{t\in(0,\infty)} t^{\g/2} \|u(t)\|_{\hh^{\g}} < \infty\  \mbox{for some}\  0 < \g < 1.
\end{gather}
We first claim  that it is enough to establish
\begin{gather}  \label{nsel2decaycond}
\liminf_{t \ra \infty} \|u(t)\| =0.
\end{gather}
Indeed,
if  (\ref{nsel2decaycond}) holds then there exists $t_0 >0$ such that 
$\|u(t_0)\| < \epsilon $ where $\epsilon $ is as 
in Theorem \ref{thm:localexistence}. Thus, by Theorem \ref{thm:localexistence}, we have
$ \sup_{t\in(t_0,\infty)}(t-t_0)^{\frac{\g}{2}} \|u(t)\|_{\hh^{\g}} < \infty $. Recall  that if the 2D NS solutions satisfy  
$\|u(\epsilon)\|_{\h^{\g}} < \infty $ for some $0< \gamma <1$, then for any later time $T > \epsilon $, we have
$\sup_{[\epsilon ,T]}\|u(t)\|_{\h^\g} < \infty $. Using these two facts as well as the local result (near $t=0$)
in Theorem \ref{thm:localexistence}, 
and proceeding as in the proof of part (ii) of Theorem
\ref{thm:nse}, one can now easily obtain (\ref{2dnsedecaycond}). \comments{Note  that 
in case $u_0 \in L^1(\R^2) \cap L^2(\R^2)$, (\ref{nsel2decaycond})
follows  from the fact that  $\|u(t)\| = O(\frac{1}{\sqrt t})$ (see \cite{S1}).}\newline
We  now turn to  prove (\ref{nsel2decaycond}) as well as the $L^2-$decay 
\begin{gather}  \label{2dnseenergydecay}
\|u(t)\|^2 = O\left( t^{-\frac{\beta}{1+\beta}}\right),
\end{gather}
for initial data  $u_0 \in \hh^{-\beta} \cap L^2(\R^2),\ \beta \in (0,1)$. 
Arguing along the lines  of Theorem \ref{thm:nse}, the energy inequality implies that 
\begin{gather*}
\liminf_{t\ra \infty} \|u(t)\|_{\hh^1} =0.
\end{gather*}
Therefore, if we can establish 
\begin{gather}  \label{unifbd1}
\sup_{t \in [0, \infty)}\|u(t)\|_{\hh^{-\beta}} <\infty, 
\end{gather}
then  (\ref{nsel2decaycond})  follows by interpolation. Subsequently, one can also use (\ref{unifbd1}) and the conclusion (\ref{nsedecay}) 
(with $t_0=0$ and $n=1$) to establish (\ref{2dnseenergydecay}).\newline
 To establish (\ref{unifbd1}),  we estimate  the nonlinear term: fix $0 < \epsilon < \beta $; then with   $A= -\Delta$ we have,
\begin{align}
  |(B(u,u), A^{-\beta}u)| &= |(A^{-\frac{1+\beta}{2} - \frac{\epsilon}{2}}B(u,u), 
 A^{\frac{1-\beta}{2} + \frac{\epsilon}{2}}u)| \nn \\
&  \le C\|A^{- \frac{\beta}{2}}u\|\|A^{\frac 12 -  \frac{\epsilon}{2}}u\|\|A^{\frac{1-\beta}{2}  + \frac{\epsilon}{2}}u\| 
\le C\|A^{- \frac{\beta}{2}}u\|\|A^{\frac 12}u\|\|A^{\frac{1-\beta}{2}}u\|. \label{impbilinineq}
\end{align}
To obtain the first inequality in (\ref{impbilinineq}), 
we  note that $B(u,u)= \nabla \cdot (u\otimes u )$ and then use (\ref{convineq}) 
with $d=2, \vt_1 = - \beta $ and $ \vt_2= 1 - \epsilon$.  
The last inequality in (\ref{impbilinineq})  is obtained using interpolation.\newline
 Multiplying (\ref{nse}) 
by $A^{- \beta}u $ and integrating (in space variables) we obtain
\begin{align*}
 \frac 12 \dt \|A^{-\frac{\beta}{2}}u(t)\|^2 &+\|A^{\frac{1-\beta}{2}}u(t)\|^2  \le
 |(B(u,u), A^{-\beta}u)|  \\
 &  \le C\|A^{- \frac \beta 2}u\|\|A^{\frac{1-\beta}{2}}u\|\|A^{\frac 12}u\| \le \frac 12\|A^{\frac{1-\beta}{2}}u\|^2 + 
C \|A^{- \frac{\beta}{2}}u\|^2\|A^{\frac 12}u\|^2;
 \end{align*}
here, the first inequality follows from (\ref{impbilinineq}) and the second from Young's inequality.
Consequently, we have
\begin{gather*}
\dt \|A^{-\frac{\beta}{2}}u(t)\|^2 - C\|A^{- \frac{\beta}{2}}u(t)\|^2\|A^{\frac 12}u\|^2 \le 0.
\end{gather*}
Applying Gronwall's inequality and recalling that $\|A^{-\frac{\beta}{2}}u\|^2 = \|u\|^2_{\hh^{- \beta}}$, we immediately obtain
\begin{gather*}
\|u(t)\|^2_{\hh^{- \beta}} \le \exp \left(C\int_0^t \|A^{1/2}u(s)\|^2 \, ds\right)\|u_0\|^2_{\hh^{- \beta}}
\le  \exp\left(C\|u_0\|^2\right)\|u_0\|^2_{\hh^{- \beta}}.
\end{gather*}
The last inequality on the right follows from the well-known Leray energy inequality. This proves (\ref{unifbd1}). \eop \\[2pt]

\noindent
{\bf Proof of Theorem \ref{thm:qg}.} 
Here we take $A= (-\Delta)^{\vt}, R=I, T={\cal R}, S=\nabla$
in Theorem \ref{thm:main}. Thus, $\br =0, \bs=1$ and $\bt=0$.
Theorem \ref{thm:main} now implies that if for some 
$\delta, \  \frac{2}{\vt}-2 < \delta < 2 $, the following condition holds,
\begin{gather}  \label{quasidecay}
 t^{\frac{\delta}{2} - \frac12(\frac{2}{\vt}-2)}\|\eta(t)\|_{\hh^{\vt \delta}} \le D(\eta_0),
\end{gather}
then we are done. Note first 
that due to Theorem 2.1 in \cite{CW}, ${\D \sup_{[0,t_0)}\|\eta\|_{\hh^{2-2\vt + \delta_0}}
< \infty }$ for any $t_0 \ge 0$. Moreover, $\bc=2-2\vt \le \vt$ for $\frac 23 \le \vt \le 1$. 
Invoking Remark \ref{rem:largetime} and proceeding as in the proof of Theorem \ref{thm:nse2d}, we see that
\eqref{quasidecay} holds. The remainder of the proof is similar. \eop \\[2pt]


\noindent
{\bf Proof of Theorem \ref{thm:ksmodel} } This follows immediately from Theorem \ref{thm:gensmalldata}.

\section{Appendix}
Here we show that a mild solution of \eqref{quadeqn} satisfying the deacy assumption \eqref{qdecay} exists 
 locally in time; moreover, if the initial data  is sufficiently small in appropriate homogeneous Sobolev space, then this mild solution persists  globally in time.
For the special case of the Navier-Stokes equation, the theorem below was first proven by Fujita and Kato \cite{fk}. 
 We will sketch the proof for completeness. 
\begin{theorem}  \label{thm:localexistence}
Consider the evolution equation \eqref{quadeqn} with ``critical" order of regularity  $\beta_c:=\beta_R+\beta_S+\beta_T+\frac{d}{2}-2\theta$, subject to initial conditions $u_0 \in \hh^{\beta_c}$. Assume that 
\begin{gather}   \label{parameter2}
\frac{1}{2}\left\{\beta_R + \max \{\beta_S, \beta_T\}\right\} < \vt <  
\beta_R + \frac{\beta_S+\beta_T}{2} + \frac d2.
\end{gather}
 \comments{ and 
\begin{gather}  \label{gdef}
\max\{ \beta_c , \frac{\beta_S+\beta_T}{2}\} < \g < 
\min\left\{ \min\{\beta_S,\beta_T\} +\frac d2 , \beta_c + \vt\, \right\}.
\end{gather}
}
Then,  there exists a  classical solution of   \eqref{quadeqn},   $u(\cdot,t), \ t\in (0,T)$
which belongs to the class $C([0,T]; \hh^{\beta_c}) \cap C((0,T); \hh^\g)$ and satisfies
\eqref{qdecay}, for an adequate $\bc < \g < \bc+\vt$. 
Moreover, there exists an $\epsilon >0$ independent of the initial data $u_0$, such that
if $\|u_0\|_{\hh^{\beta_c}} < \epsilon $, then there exists a strong solutions global in time $u \in C([0,\infty); \hh^{\beta_c})$, and the following estimate holds
\begin{gather*}
\sup_{t \in (0,\infty)}\max \left\{\|u(t)\|_{\hh^{\beta_c}},  t^{\frac{\g-\beta_c}{2\vt}} \|u(t)\|_{\hh^\g}\right\}
\le 2\|u_0\|_{\hh^{\beta_c}}.
\end{gather*}
\end{theorem}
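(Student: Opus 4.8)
The plan is to construct $u$ as the fixed point of the Duhamel (mild) formulation
\[
u(t)=e^{-tA}u_0+N(u,u)(t),\qquad N(u,v)(t):=\int_0^t e^{-(t-s)A}B(u(s),v(s))\,ds,\quad A=(-\Delta)^\vt,
\]
running a contraction argument in the scaling-adapted space $K_T:=\{u:\ \|u\|_{K_T}<\infty\}$ with $\|u\|_{K_T}:=\sup_{0<t<T}t^{\mu}\|u(t)\|_{\hh^\g}$, where $\mu:=\frac{\g-\bc}{2\vt}$ and $\g$ is any fixed exponent in the window $\max\{\bc,\tfrac{\bs+\bt}{2}\}<\g<\min\{\min\{\bs,\bt\}+\tfrac d2,\ \bc+\vt\}$. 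A short computation shows that this window is non-empty precisely under hypothesis \eqref{parameter2}, which is where that assumption enters. Writing $y$ for the free evolution $t\mapsto e^{-tA}u_0$, the whole argument then rests on the elementary fixed-point lemma: if $N$ is bounded bilinear on $K_T$ with $\|N(u,v)\|_{K_T}\le C_0\|u\|_{K_T}\|v\|_{K_T}$ and $\|y\|_{K_T}<\tfrac{1}{4C_0}$, then $u=y+N(u,u)$ has a unique solution with $\|u\|_{K_T}\le 2\|y\|_{K_T}$. Thus everything reduces to one linear and one bilinear estimate.

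First I would record the fractional–heat smoothing bound, the analogue of \eqref{bootstrapest}: for $a\ge b$,
\[
\|e^{-tA}f\|_{\hh^a}\le C\,t^{-\frac{a-b}{2\vt}}\|f\|_{\hh^b},
\]
obtained by maximizing $|\xi|^{2(a-b)}e^{-2t|\xi|^{2\vt}}$ over $\xi$. Taking $a=\g,\ b=\bc$ gives $\sup_{t>0}t^{\mu}\|e^{-tA}u_0\|_{\hh^\g}\le C_1\|u_0\|_{\hh^{\bc}}$, so $\|y\|_{K_T}\le C_1\|u_0\|_{\hh^{\bc}}$ uniformly in $T$; evaluating the maximizer shows in fact $C_1=(\mu/e)^{\mu}\le1$ on the relevant range $0<\mu<\tfrac12$, which is what makes the sharp constant $2$ below attainable. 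This uniform bound handles the global small-data case once $\|u_0\|_{\hh^{\bc}}$ is small. For the local statement with arbitrary data I would instead use that $t^{\mu}\|e^{-tA}u_0\|_{\hh^\g}\to0$ as $t\to0^+$ — immediate when $\widehat{u_0}$ is compactly supported, then by density together with the uniform bound — so that $\|y\|_{K_T}\to0$ as $T\to0^+$ and the smallness hypothesis of the lemma holds on a short enough interval.

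The heart of the matter is the $T$-independent bilinear bound. Set $\sigma:=2\g-(\br+\bs+\bt+\tfrac d2)=2\g-\bc-2\vt$ for the target regularity of the product. Using \eqref{operassump} for $R$, then the homogeneous product inequality \eqref{convineq} with $\vt_1=\g-\bs,\ \vt_2=\g-\bt$, and finally \eqref{operassump} for $S$ and $T$, I obtain $\|B(u,v)\|_{\hh^\sigma}\le C\|u\|_{\hh^\g}\|v\|_{\hh^\g}$; here \eqref{convineq} applies exactly because $\g>\tfrac{\bs+\bt}{2}$ forces $\vt_1+\vt_2>0$ while $\g<\min\{\bs,\bt\}+\tfrac d2$ forces $\vt_i<\tfrac d2$. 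Inserting this into the Duhamel integral, applying the smoothing bound with $a=\g,\ b=\sigma$ (so that $\g-\sigma=2\vt(1-\mu)$ and the exponent is $1-\mu\in(0,1)$), and using $\|u(s)\|_{\hh^\g}\le\|u\|_{K_T}s^{-\mu}$ gives
\[
t^{\mu}\|N(u,v)(t)\|_{\hh^\g}\le C\|u\|_{K_T}\|v\|_{K_T}\,t^{\mu}\!\!\int_0^t(t-s)^{-(1-\mu)}s^{-2\mu}\,ds=C\,\mathrm{B}(\mu,1-2\mu)\,\|u\|_{K_T}\|v\|_{K_T},
\]
the beta integral converging precisely because $\mu>0$ and $2\mu<1$ (the latter being $\g<\bc+\vt$) and, crucially, producing a constant independent of both $t$ and $T$. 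This closes the fixed point and yields the solution together with $\|u\|_{K_T}\le D_0$, which is exactly \eqref{qdecay}.

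Finally I would recover the stated regularity and the precise constant. Rerunning the same estimate with target index $\bc$ instead of $\g$ (now $\bc-\sigma=2\vt(1-2\mu)>0$, time exponent $1-2\mu\in(0,1)$) gives $\sup_{0<t<T}\|N(u,u)(t)\|_{\hh^{\bc}}\le C\,\mathrm{B}(2\mu,1-2\mu)\|u\|_{K_T}^2<\infty$; combined with $e^{-tA}u_0\in C([0,T];\hh^{\bc})$ (strong continuity of the semigroup) and a routine dominated-convergence argument for the continuity of the Duhamel term, this places $u$ in $C([0,T];\hh^{\bc})\cap C((0,T);\hh^\g)$. In the small-data case, the uniform bound $\|y\|_{K_\infty}\le C_1\|u_0\|_{\hh^{\bc}}$ with $C_1\le1$, the contraction estimate $\|u\|_{K_\infty}\le2\|y\|_{K_\infty}$, and the $\hh^{\bc}$ bilinear bound give, after shrinking $\eps$ to absorb the quadratic term, the asserted $\sup_{t>0}\max\{\|u(t)\|_{\hh^{\bc}},\,t^{\mu}\|u(t)\|_{\hh^\g}\}\le2\|u_0\|_{\hh^{\bc}}$, with $\eps$ depending only on the structural constants. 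I expect the bilinear estimate to be the principal obstacle: the exponent $\g$ must be chosen in its narrow admissible window so that simultaneously \eqref{convineq} is applicable, the two smoothing exponents $1-\mu$ and $1-2\mu$ lie in $(0,1)$, and both time integrals converge to $T$-independent beta constants — and it is exactly \eqref{parameter2} that keeps this window non-empty.
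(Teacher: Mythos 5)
Your proposal is correct and follows essentially the same route as the paper's Appendix proof: the Fujita--Kato/Weissler fixed-point argument for the mild (Duhamel) formulation, built on the same three ingredients --- the semigroup smoothing bound \eqref{lindecay}, the multiplier bounds \eqref{operassump}, and the product estimate \eqref{convineq} --- together with the same density argument showing the free-evolution norm vanishes as $T \ra 0^{+}$. The only cosmetic difference is that you run the contraction in the single weighted norm $\sup_{0<t<T} t^{\mu}\|u(t)\|_{\hh^{\g}}$ and recover the $C([0,T];\hh^{\bc})$ bound a posteriori, whereas the paper's space $\V$ carries both norms from the start; you also make explicit details the paper leaves implicit (the beta-function integrals and the admissible window for $\g$, which matches the paper's commented-out condition \eqref{gdef}).
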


As an example, the last the last theorem applies to Burgers' equation \eqref{1dburger} with $\beta_c=-\sfrac{1\!}{2}$ and $\vt=1$ (so that \eqref{parameter2} holds $\frac12< \vt< \frac32$), and high-order decay  follows, $t^{\frac{2\gamma+1}{4}}\|u(t)\|_{\hh^\gamma}$ with $\gamma\in (-\frac12,\frac12)$. 
\begin{proof}
  The proof of this result follows the method of \cite{wei} (see also \cite{bisw}) for the Navier-Stokes equations. 
 We will use fixed point method to obtain a the mild solution
 of (\ref{quadeqn}), namely,
 \begin{gather}  \label{mildsol}
 u(t) = e^{-tAu_0} + \int_0^t e^{-(t-s)A}B(u(s),u(s))\, ds.
 \end{gather}
 Fix any $0 < T \le \infty $ and note that due to (\ref{linest}), for any $\beta \in \R$, it follows that
 \begin{gather}  \label{lindecay}
 \|e^{-tA}u_0\|_{\hh^{\beta}} \le C\|u_0\|_{\hh^{\beta}}\ \mbox{and}\ 
 t^{\frac{\g -\beta}{2\vt}} \|e^{-tA}u_0\|_{\g} \lesssim \|u_0\|_{\hh^{\beta}}, 0 < t < T, \g > \beta.
 \end{gather}
 Let $\g > \beta_c$ be fixed. Define
 \begin{gather}   \label{mdef}
 M(T) = M := \sup_{t \in (0,T)} 
  t^{\frac{\g -\beta_c}{2\vt}} \|e^{-tA}u_0\|_{\g}.
 \end{gather}
 It is easy to see that $M(T) \ra 0$ as $T\ra 0$. To see this, simply note that given any $\delta >0$, there exists
 $u_0' \in \hh^{\g}$ and such that $\|u_0 - u_0'\|_{\hh^{\beta_c}} < \delta $ 
 and by  (\ref{lindecay}), for $0 < t <T$, we have
 \begin{gather*}
 t^{\frac{\g -\beta_c}{2\vt}}\|e^{-tA}u_0\|_{\hh^\g} \le \|u_0 - u_0'\|_{\hh^\g}+
 t^{\frac{\g -\beta_c}{2\vt}} \|e^{-tA}u_0'\|_{\g} \le t^{\frac{\g -\beta_c}{2\vt}}\|u_0'\|_{\g}.
 \end{gather*}
 The first term in the right hand side of the above inequality is less than $\delta $ while the second approaches zero as $T\ra 0$.

 \medskip\noindent
 Consider the linear Banach space 
\begin{align*}
  \V = \left\{u \in  C([0,T];\hh^{\beta_c})\cap C((0,T); \hh^{\g}): 
 \|u\|_\V:= \sup_{0< t < T}\max\{ \|u(t)\|_{\hh^{\beta_c}}, 
  t^{\frac{\g - \beta_c}{2\vt}}\|u(t)\|_{\hh^\g}\}< \infty \ \right\},
 \end{align*}
and let $E \subset \V$ be the closed ball,
 \begin{gather}  \label{edef}
 E:= \{u \in \V: \|u\|_\V \le 2M\}.
 \end{gather}
For $u,v \in \V$, we define  
 \begin{gather*}  
 S(u,v):= \int_0^t e^{-(t-s)A}B(u(s),v(s))\, ds,
 \end{gather*}
 and claim that $S(\cdot,\cdot):\V \times \V \ra \V$ is a bounded bilinear operator, i.e., 
 \begin{gather}  \label{bddbilin}
 \|S(u,v)\|_\V \lesssim \|u\|_\V\|v\|_\V. 
 \end{gather}
Indeed, note  that due to (\ref{operassump}), (\ref{lindecay}) and (\ref{convineq}), we have
 \begin{align*}
 \|e^{-(t-s)A}B(u,v)\|_{\hh^\g} & \lesssim {(t-s)^{-\frac{\beta_c+2\vt -\g}{2\vt}}}\|u(s)\|_{\hh^\g}\|v(s)\|_{\hh^\g}
  \lesssim (t-s)^{-\frac{(\beta_c+2\vt -\g)}{2\vt}}s^{-\frac{\g -\beta_c}{\vt}}\|u\|_\V\|v\|_\V.
 \end{align*}
 Using this and the elementary inequality ${\D \int_0^t \frac{1}{(t-s)^as^b}\, ds \le t^{1-a-b}, 0 < a,b <1}$, one obtains
 \begin{gather*}
  t^{\frac{\g-\beta_c}{2\vt}}\|S(u,v)\|_{\hh^\g} \lesssim \|u\|_\V\|v\|_\V.
 \end{gather*}
 The other piece of the norm can similarly estimated. The rest of the proof is now standard. 
 One defines a map $\tau:\V \ra \V$ by the formula $\tau u = e^{-tA}u_0 + S(u,u)$. Using the estimates, one can show that
 it is a contractive self map of $E$ if $M$ is sufficiently small. From the fact that $M(T) \ra 0$ as $T \ra 0$ or (\ref{lindecay}), this holds
 if either $T$ is small enough or $\|u_0\|_{\hh^{\beta_c}}$ is sufficiently small.
\end{proof}

\end{document}